\titleformat{\section}[block]
  {\fontsize{12}{15}\filcenter}
  {\thesection}
  {1em}
  {\MakeUppercase}
\titleformat{\subsection}[runin]
  {\fontsize{12}{15}\bfseries}
  {\thesubsection}
  {1em}
  {}
\definecolor{Maroon}{rgb}{0.69, 0.19, 0.38}
\definecolor{Fuchsia}{rgb}{1.0, 0.0, 1.0}
\newtheorem{theorem}{Theorem}[section]
\newtheorem{lemma}[theorem]{Lemma}
\newtheorem{corollary}[theorem]{Corollary}
\newtheorem{definition}[theorem]{Definition}
\theoremstyle{remark} \newtheorem{remark}{\bf\em Remark}
\theoremstyle{remark} 
\theoremstyle{remark} 
\def\N{\mathbb{N}}
\def\R{\mathbb{R}}
\def\P{\mathbb{P}}
\def\E{\mathbb{E}}
\def\FF{\mathscr{F}}
\def\GG{\mathscr{G}}
\def\BB{\mathscr{B}}
\def\PP{\mathscr{P}}
\def\SS{\mathscr{S}}
\def\LL{\mathcal{L}}
\renewcommand{\phi}{\varphi}
\renewcommand{\epsilon}{\varepsilon}
\newcommand{\1}{{\text{\Large $\mathfrak 1$}}}
\newcommand{\comp}{\raisebox{0.1ex}{\scriptsize $\circ$}}
\newcommand{\var}{\operatorname{var}}
\newcommand{\cov}{\operatorname{cov}}
\newcommand{\eqdist}{\stackrel{\text{\rm (d)}}{=}}
\definecolor{mygray}{gray}{0.6}
\definecolor{deeppink}{RGB}{255,20,147}
\definecolor{mygreen}{rgb}{0.05, 0.576, 0.03}
\definecolor{myred}{rgb}{0.768, 0.09, 0.09}
\long\def\symbolfootnote[#1]#2{\begingroup
\def\thefootnote{\fnsymbol{footnote}}\footnote[#1]{#2}\endgroup}
\newcommand{\keywords}[1]{ \noindent {\footnotesize
             {\small \em Keywords and phrases.} {\sc #1} } }
\newcommand{\ams}[2]{  \noindent {\footnotesize
             {\small \em AMS {\rm 2010} subject classification.
             {\rm Primary {\sc #1}; secondary {\sc #2}} } } }
\newcounter{para}
\def\NN{\mathscr N}
\def\L{\mathcal L}
\def\S{\mathfrak S}
\def\Phieq{\bm[\Phi^*\bm]}
\definecolor{MidnightBlue}{rgb}{0.1, 0.1, 0.44}
\def\T{\mathcal T}
\def\D{\mathbb D}
\theoremstyle{plain} \newtheorem*{theorem*}{Theorem}
\title{\bf On the extendibility of finitely exchangeable probability measures} 
\author[1]{Takis Konstantopoulos$^*$}
\author[2]{Linglong Yuan$^{*\bullet}$}
\affil[1]{Department of Mathematics, Uppsala University}
\affil[2]{Institut f\"ur Mathematik,
Johannes-Gutenberg-Universit\"at Mainz}
\begin{document}

\maketitle

\begin{abstract}
A length-$n$ random sequence $X_1,\ldots,X_n$ 
in a space $S$ is finitely exchangeable if its distribution is
invariant under all $n!$ permutations of coordinates.
Given $N > n$, we study the extendibility problem: when is it the
case that there is a length-$N$ exchangeable random sequence $Y_1,\ldots, Y_N$
so that $(Y_1,\ldots,Y_n)$ has the same distribution as $(X_1,\ldots,X_n)$?
In this paper, we give a necessary and sufficient condition so that,
for given $n$ and $N$, the extendibility problem admits a solution.
This is done by employing functional-analytic and measure-theoretic arguments
that take into account the symmetry.
We also address the problem of infinite extendibility. 
Our results are valid when $X_1$ has a regular distribution in
a locally compact Hausdorff space $S$.
We also revisit the problem of representation of the distribution of
a finitely exchangeable sequence.

\symbolfootnote[0]{$\!\!\!^*$Supported by Swedish Research
Council grant 2013-4688}
\symbolfootnote[0]{$\!\!\!^\bullet$Supported by German DFG priority programme SPP 1590 {\it Probabilistic
structures in evolution}}

\ams{60G09, 	
28C05   	
}
{46B99,   	
28A35   	
62F15,   	
28C15   	
}
\\[1mm]
\keywords{exchangeable; finitely-exchangeable; extendible; signed measure;
set function; bounded linear functional; Hahn-Banach; permutation;
de Finetti; urn measure; symmetric} 
\end{abstract}


\section{Introduction and motivation}
Exchangeability is one of the most important topics in probability theory
with a wide range of applications.
Most of the literature is concerned with exchangeability
for an infinite sequence $X=(X_1,X_2,\ldots)$ of random variables,
taking values in some common space $S$,
in the sense that the distribution of the sequence does not
change when we permute finitely many of the variables. David Aldous'
survey \cite{Aldous85} of the topic gives a very good overview.
The basic theorem in the area is de Finetti's theorem 
stating that, under suitable assumptions on $S$,
an exchangeable sequence is a mixture of i.i.d.\
random variables \cite{DeFi37, Dynkin53, HewSav55, Ka02};
that is, there is a probability measure $\mu$ on the space
$\PP(S)$ of probability measures on $S$ such that
\begin{equation}
\label{DeFin}
\P(X \in \cdot) = \int_{\PP(S)} \pi^\infty(\cdot)\, \mu(d\pi),
\end{equation}
and, moreover, the so-called mixing measure $\mu$ is unique.
The most usual condition on $S$  is that it be 
a Borel space (a measure space that is measure-isomorphic
to a Borel subset of the real line). In this case,
the regular conditional distribution $\eta$ of $X_1$ 
given the invariant $\sigma$-algebra of $(X_1,X_2,\ldots)$ exists,
and the mixing measure $\mu$ is simply 
the law of the random measure $\eta$. The idea of this is due
to Ryll-Nardzewski \cite{RN} and a modern proof of it
can be found in Kallenberg \cite{Ka02}.
Another condition on $S$ is that it be a locally compact
 Hausdorff space
equipped with the $\sigma$-algebra of Baire sets; see
Hewitt and Savage \cite{HewSav55}.
However, \eqref{DeFin} fails for a general space $S$;
see Dubins and Freedman \cite{DubFree79} for a classical counterexample.

Throughout the paper, when $(S,\SS)$ is a measurable space,
$S^n$ is equipped with the product $\sigma$-algebra $\SS^n$, and $S^\N$,
the set of sequences in $S$, is equipped with the $\sigma$-algebra $\SS^{\N}$
generated by cylinder sets.
Also, the set $\PP(S)$ of probability measures on $(S,\SS)$
is equipped with the smallest $\sigma$-algebra that
makes the functions $\PP(S) \ni \pi \mapsto \pi(B) \in \R$, 
where $B$ ranges over $\SS$,
measurable.

Our paper is concerned with 
finitely exchangeable sequences $(X_1,\ldots, X_n)$ 
of a fairly general space $S$. We say that $(X_1,\ldots, X_n)$
is $n$-exchangeable, or, simply, exchangeable, if its law is invariant
under any of the $n!$ permutations of the variables. 
Finitely exchangeable sequences appear naturally in biology models
e.g., in the exchangeable external branch lengths in coalescent
processes \cite{Yuan13, Yuan14, Yuan15, Ker11, Ker14},
as well as in statistical physics models \cite{chalmers07}.

The following issues are
well-known. First, finitely exchangeable sequences need not be
mixtures of i.i.d.\ random variables. Second, finitely exchangeable sequences
of length $n$ may not be extendible to longer 
(finite or infinite) exchangeable sequences.

Regarding the first issue, there is the following, at first surprising,
result: 
\begin{theorem} [finite exchangeability representation result]
\label{rrr}
Let $(S,\SS)$ be an arbitrary measurable space and
let $(X_1,\ldots,X_n)$ be an $n$-exchangeable
random element
of $(S^n,\SS^n)$.
Then there is a {\em signed} measure
$\nu$, with finite total variation,
on the space $\PP(S)$ of probability measures on $S$, such that
\begin{equation}
\label{fre}
\P((X_1, \ldots, X_n) \in A)
= \int_{\PP(S)} \pi^n(A)\, \nu(d \pi),
\quad \forall A \in \SS^n.
\end{equation}
\end{theorem}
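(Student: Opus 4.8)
The plan is to represent the symmetrised point mass at each configuration $x=(x_1,\dots,x_n)$ as a signed mixture of $n$-fold products of measures supported on the points $x_1,\dots,x_n$, and then to average over $x$ against the law of $X$. Write $P=\P((X_1,\dots,X_n)\in\cdot)$ and, for $x\in S^n$, let $\overline{\delta_x}=\frac1{n!}\sum_{\sigma}\delta_{(x_{\sigma(1)},\dots,x_{\sigma(n)})}$ be the symmetrisation of $\delta_x$, the sum running over the $n!$ permutations $\sigma$. Since $P$ is $n$-exchangeable, $\int\1_A(x_{\sigma(1)},\dots,x_{\sigma(n)})\,P(dx)=P(A)$ for each $\sigma$, whence $P(A)=\int_{S^n}\overline{\delta_x}(A)\,P(dx)$ for every $A\in\SS^n$. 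It therefore suffices to produce, uniformly in $x$, a signed representation of $\overline{\delta_x}$ by products $\pi^n$, and then to integrate against $P$.

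The heart of the argument is a finite-dimensional moment identity that does not depend on $x$. Let $\Delta=\{t\in\R_{\ge0}^n:\sum_j t_j=1\}$ be the simplex. First I would show that there is a signed measure $\nu_0=\sum_{p} a_p\,\delta_{t^{(p)}}$ on $\Delta$, with finitely many $t^{(p)}\in\Delta$, whose degree-$n$ moments are prescribed by
\[
\int_\Delta \prod_{j=1}^n t_j^{\,m_j}\,\nu_0(dt)=\tfrac1{n!}\,\1\{(m_1,\dots,m_n)=(1,\dots,1)\},\qquad \textstyle\sum_j m_j=n.
\]
Existence is pure linear algebra: the monomials $\{\prod_j t_j^{m_j}:\sum_j m_j=n\}$ are linearly independent as functions on $\Delta$, since a homogeneous polynomial of degree $n$ vanishing on $\Delta$ also vanishes on the whole open positive orthant (rescale $s\mapsto s/\sum_j s_j$) and so is identically zero. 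Hence the degree-$n$ moment vectors of the point masses $\delta_t$, $t\in\Delta$, span the whole space $\R^{D}$, where $D=\#\{m\in\N^n:\sum_j m_j=n\}$, and the prescribed vector is realised by some finite signed combination $\sum_p a_p\,\delta_{t^{(p)}}$.

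Next I would transport $\nu_0$ to $\PP(S)$. For $x\in S^n$ define $T_x\colon\Delta\to\PP(S)$ by $T_x(t)=\sum_{j=1}^n t_j\,\delta_{x_j}$ and set $\nu_x=(T_x)_*\nu_0=\sum_p a_p\,\delta_{T_x(t^{(p)})}$. Expanding the product measure gives, for every $A\in\SS^n$,
\[
\int_{\PP(S)}\pi^n(A)\,\nu_x(d\pi)=\sum_{i\in\{1,\dots,n\}^n}\Big(\int_\Delta\prod_{\ell=1}^n t_{i_\ell}\,\nu_0(dt)\Big)\,\1_A(x_{i_1},\dots,x_{i_n}),
\]
and by the moment identity the bracket equals $1/n!$ exactly when $i$ is a permutation of $(1,\dots,n)$ and $0$ otherwise; the right-hand side is thus $\frac1{n!}\sum_\sigma\1_A(x_{\sigma(1)},\dots,x_{\sigma(n)})=\overline{\delta_x}(A)$. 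This holds verbatim even when the $x_j$ are not distinct, which is precisely why the construction needs no case analysis on coincidences among the coordinates and no appeal to measurability of the diagonal.

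Finally I would assemble the global signed measure. Put $\nu=\sum_p a_p\,(\Phi_p)_*P$, where $\Phi_p(x)=T_x(t^{(p)})=\sum_j t_j^{(p)}\delta_{x_j}$; each $\Phi_p\colon S^n\to\PP(S)$ is measurable because $x\mapsto\Phi_p(x)(B)=\sum_j t_j^{(p)}\1_B(x_j)$ is measurable for every $B\in\SS$ and the $\sigma$-algebra on $\PP(S)$ is generated by the maps $\pi\mapsto\pi(B)$. Then $\nu$ is a signed measure with total variation at most $\sum_p|a_p|<\infty$, and interchanging the finite sum with the integral yields
\[
\int_{\PP(S)}\pi^n(A)\,\nu(d\pi)=\int_{S^n}\Big(\sum_p a_p\,(\Phi_p(x))^n(A)\Big)\,P(dx)=\int_{S^n}\overline{\delta_x}(A)\,P(dx)=P(A),
\]
which is \eqref{fre}. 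I expect the main obstacle to be the finite-dimensional lemma of the second paragraph: isolating the correct moment prescription and verifying that signed measures (which, unlike probability measures, need not reproduce a genuine mixture) suffice to realise it. Once that is in place, the exchangeability of $P$ together with the pushforward construction makes the passage to an arbitrary measurable space $(S,\SS)$ essentially automatic.
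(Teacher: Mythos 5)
Your proof is correct. Note that the paper itself does not prove Theorem \ref{rrr}: it refers the reader to \cite{JKY}, and your argument has the same skeleton as the proof given there. Both reduce the theorem, via exchangeability, to the identity $P(A)=\int_{S^n}\overline{\delta_x}(A)\,P(dx)$ and then represent the symmetrized Dirac $\overline{\delta_x}$, with coefficients \emph{independent of} $x$, as a finite signed combination of $n$-th powers of convex combinations $\sum_j t_j\delta_{x_j}$ of the coordinate Diracs, finishing by pushing $P$ forward under the measurable maps $x\mapsto\sum_j t_j^{(p)}\delta_{x_j}$. Where you genuinely differ is the key finite-dimensional lemma: \cite{JKY} obtains the universal identity explicitly, via a polarization/inclusion--exclusion formula of the shape
\[
\overline{\delta_x}=\frac{1}{n!}\sum_{\emptyset\neq A\subseteq\{1,\dots,n\}}(-1)^{n-|A|}\,|A|^{n}\,\bar\pi_A^{\,n},
\qquad \bar\pi_A:=\frac{1}{|A|}\sum_{i\in A}\delta_{x_i},
\]
whereas you establish the existence of the signed measure $\nu_0$ on the simplex nonconstructively, from the linear independence of the degree-$n$ monomials on $\Delta$ together with duality. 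The explicit route buys concrete atoms (empirical measures of subsets of coordinates) and an explicit numerical bound on $\|\nu\|$, which matters because the theorem asserts finite total variation and the size of directing measures is of independent interest; your soft spanning argument is shorter and makes transparent that only the moment prescription $\int_\Delta\prod_j t_j^{m_j}\,\nu_0(dt)=\frac{1}{n!}\1\{m=(1,\dots,1)\}$ is needed, and, as you correctly stress, it handles coincident coordinates with no case analysis. The only step you leave implicit is the standard monotone-class fact that $\pi\mapsto\pi^n(A)$ is measurable on $\PP(S)$ for every $A\in\SS^n$; this is needed in your change-of-variables step (and indeed for \eqref{fre} to be well posed at all), but it is routine.
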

This result was first proved by Jaynes \cite{Jay86} for the case
where $S$ is a 2-element set. See also the paper
of Diaconis \cite{Diaconis77} for a clear discussion of the geometry
behind this formula. The general case, i.e., for arbitrary measurable
space $(S,\SS)$, was considered by Kerns and Sz\'ekely \cite{KerSze06}.
In \cite{JKY} we gave a short, complete, and general proof of the formula
(also clarifying/correcting some subtle points of \cite{KerSze06})
and established some notation which is also used
in this current paper.
We express the above result by saying that the law of an
$n$-exchangeable random vector is a  (signed) mixture
of product measures.
Moreover, $\nu$ is not necessarily unique and, typically, it is not.
The signed measure $\nu$ is referred to as a directing signed measure.
By ``signed'' we of course mean ``not necessarily nonnegative''.

Consider now the second issue, that of extendibility. 
Let $(S, \SS)$ be a measurable space and $(X_1,\ldots,X_n)$
an $n$-exchangeable sequence of random elements of $S$.
\\
{\em (a) Finite extendibility.}
For integer $N > n$, we say that $(X_1,\ldots,X_n)$ is $N$-extendible
if there is an $N$-exchangeable sequence $(Y_1,\ldots,Y_N)$ of
random elements of $S$ such that
$(X_1,\ldots,X_n) \eqdist (Y_1,\ldots, Y_n)$, where $\eqdist$
denotes equality in distribution.
\\
{\em (b) Infinite extendibility.}
We say that $(X_1,\ldots,X_n)$ is infinitely-extendible, if there is an 
infinite sequence $(Y_1,Y_2,\ldots)$ that is exchangeable and
$(X_1,\ldots,X_n) \eqdist (Y_1,\ldots, Y_n)$.
\\
Alternatively, we say that an exchangeable probability measure $P_n$ on $S^n$,
invariant under all $n!$ coordinate permutations,
is $N$-extendible if there is an exchangeable probability measure $P_N$ on $S^N$
such that $P_n(A) = P_N(A \times S^{N-n})$, for all $A \in \SS^n$.
Similarly for infinite extendibility.

It is not difficult to see that
a finitely exchangeable sequence may not be extendible. 
For instance,
let $P_n$ be the probability measure corresponding to sampling 
without replacement from an urn with $n \ge 2$ different items. 
Specifically, let $S=\{0,1\}$ and $P_n$ the uniform probability
measure on the set $(x_1,\ldots,x_n) \in S^n$ such that
the number of $x_i$ that are equal to $1$ is fixed and equal to $k$, say.
(The value of $P_n$ at a single $(x_1,\ldots,x_n)$ equals 
$1/\binom{n}{k}$.)
Then it is easy to see that there is no $N>n$ 
for which $P_n$ is $N$-extendible.

One of the goals of this paper is to give necessary and 
sufficient conditions for $N$-extendibility.
A trivial case is that of mixture of product measures, i.e., if 
$P_n(A) = \int_{\PP(S)} \pi^n(A)\, \nu(d\pi)$ for some
{\em probability measure} $\nu$ on $\PP(S)$ then, clearly,
$P_n$ is $N$-extendible. 
But this case is far from being necessary.

The extendibility problem has attracted some attention in the literature. 
For a finite set $S$
the finite extendibility problem reduces to 
the problem of determining whether a point is located in a convex set 
in a multidimensional real vector space. 
This geometric point of view was initiated 
by de Finetti \cite{DeFi69} and further pursued 
by Crisma \cite{crisma71,crisma82}, Spizzichino \cite{spi82},
and Wood \cite{wood92}. The complexity of the problem increases when 
$S$ is an infinite set.
In this case, there is no general method characterizing finite extendibility.
An important contribution is made by Diaconis \cite{Diaconis77} 
and Diaconis and Freedman \cite{DiaFree80} provided finite extendibility.
They show that, 
given a certain finite extendibility,
one may bound the total variation distance between the 
given exchangeable sequence and the closest (true) mixture of i.i.d.\ 
random variables
(and this provides another proof of de Finetti's theorem.)


Regarding infinite extendibility,
de Finetti \cite{DeFi31} gives a condition 
for the binary case ($|S|=2$) using characteristic functions. 
When $S$ is a general measurable space, no 
criteria for finite or infinite extendibility exist. 
For either problem, only {\em necessary} conditions exist; see, e.g.,
Scarsini \cite{scar85}, von Plato \cite{plato91}, and 
Scarsini and Verdicchio \cite{scarver93}. 
For the case $S=\R$ and when variances exist,
one simple necessary (but far from sufficient) condition 
for infinite extendibility is that any pair of variables 
have nonnegative covariance (\cite[page 591]{KerSze06}). 
This is certainly not sufficient 
(see \textsection A.1 for a simple counterexample).  
As far as we know, the extendibility problem  has been dealt with
on a case-by-case basis.
For example,
Gnedin \cite{Gnedin96} considers densities on $\R^n$ that are functions 
of minima and/or maxima, and Liggett, Steif and T\'oth \cite{chalmers07} 
solve a particular problem
of infinite extendibility within the context of statistical mechanical systems.


In this paper, we provide a necessary and sufficient condition for the
extendibility problem.
First of all, we do not restrict ourselves to the finite $S$ case.
One of our concerns is to work with as general $S$ as possible.
There are topological restrictions to be imposed on $S$, arising
from the methods of our proofs. 
We define certain linear operators via symmetrizing functionals
on finite products of $S$ and use functional analysis techniques
to give a necessary and sufficient condition for extendibility.

We use the term ``primitive $N$-extending functional'' for 
the most basic of these operators (see Section \ref{secpef}),
denote it by $\mathcal E^N_n$,
and construct it as follows.
Let $g: S^n\to \R$ be bounded and measurable. For $N \ge n$,
define $U^N_n g: S^N\to \R$ to be a symmetric function
obtained by selecting $n$ of the $N$ variables $x_1,\ldots,x_N$
at random without replacement and by evaluating $g$ at this selection:
\[
U^N_n g(x_1,\ldots,x_N) 
:= \frac{1}{(N)_n} \sum_{\sigma \in \S[n,N]}
g(x_{\sigma(1)}, \ldots, x_{\sigma(n)}),
\]
where 
\[
\S[n,N]:=\text{ the set of all injections from $\{1,\ldots,n\}$
into $\{1,\ldots,N\}$}
\]
and 
\[
(N)_n := N(N-1)\cdots (N-n+1) = |\S[n,N]|
\]
is the cardinality of this set.
When $N=n$, the set
$
\S[N]:=\S[N,N]
$
is the set of all permutations of $\{1,\ldots,N\}$.
Now define the linear functional
\[
\mathcal E^N_n: U^N_n g \mapsto \E g(X_1,\ldots,X_n),
\]
whenever the function $g:S^n \to \R$ is bounded and measurable. 
At first sight, this might not even be a function; but it turns out to be,
and this is shown in Section \ref{symmetrizing}, Lemma \ref{pcrucial}, 
a statement that depends on the properties of urn measures developed 
in Section \ref{urnmeasures}.
Let $(X_1,\ldots, X_n)$ be an exchangeable random element of $S^n$.
Define
\[
\|\mathcal E^N_n\| := \sup\bigg\{| \E g(X_1,\ldots,X_n) |:\,
\text{$g$ bounded measurable},\,
|U^N_n g(x)| \le 1 \text{ for all } x \in S^N
\bigg\}.
\]
Then our first result can be formulated as follows.
\begin{theorem}\label{ke}
Suppose that $S$ is a locally compact Hausdorff space and $\SS$ its
Borel $\sigma$-algebra.
Let $X=(X_1, \ldots, X_n)$ be an exchangeable random element of $S^n$
such that the law of $X_1$ is outer regular and tight. 
Let $N$ be a positive integer, $N \ge n$.
Then $(X_1,\ldots, X_n)$ is $N$-extendible if and only if $\|\mathcal E_{n}^N\|=1$.
\end{theorem}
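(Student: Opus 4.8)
The plan is to show that the normalization $\|\mathcal{E}^N_n\| = 1$ is exactly the condition under which the linear functional $\mathcal{E}^N_n$, defined on the subspace of symmetric functions of the form $U^N_n g$, can be realized by integration against an $N$-exchangeable probability measure. First I would observe that one direction is essentially a consistency check: if $(X_1,\ldots,X_n)$ is $N$-extendible, with extending measure $P_N$ on $S^N$, then for every bounded measurable $g:S^n\to\R$ we have $\E g(X_1,\ldots,X_n) = \int_{S^N} U^N_n g \, dP_N$, because sampling $n$ coordinates without replacement from an $N$-exchangeable vector reproduces the law of $(X_1,\ldots,X_n)$. From this identity, $|\E g(X_1,\ldots,X_n)| \le \|U^N_n g\|_\infty$, so $\|\mathcal{E}^N_n\| \le 1$; and testing against $g \equiv 1$ (for which $U^N_n g \equiv 1$ and $\E g = 1$) forces $\|\mathcal{E}^N_n\| = 1$.

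The substance is the converse. Assuming $\|\mathcal{E}^N_n\| = 1$, the plan is to extend $\mathcal{E}^N_n$ from the subspace $V := \{U^N_n g : g \text{ bounded measurable}\}$ to a bounded linear functional on a suitable Banach space of symmetric functions on $S^N$ with the sup norm, keeping the operator norm equal to $1$. The natural tool is the Hahn--Banach theorem: since $\mathcal{E}^N_n$ has norm $1$ on $V$, it admits a norm-preserving extension $\Lambda$ to the full space of (say, continuous bounded, or an appropriate symmetric) functions on $S^N$. The functional $\Lambda$ is positive-normalized in the sense that $\Lambda(1)=1$ and $\|\Lambda\|=1$, which together force positivity: a norm-one functional with $\Lambda(1)=1$ on a space of real functions must be a positive functional. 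I would then invoke a Riesz-type representation theorem — this is exactly where the locally compact Hausdorff hypothesis and the outer-regularity/tightness of the law of $X_1$ enter — to represent $\Lambda$ as integration against a genuine (nonnegative, regular, tight) \emph{probability} measure $P_N$ on $S^N$.

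It remains to check that the representing measure $P_N$ is $N$-exchangeable and that it genuinely extends the original law. Symmetry of $P_N$ should follow because $\Lambda$ was built on the space of symmetric functions (or because $\mathcal{E}^N_n$ is already symmetric by construction, via the averaging in $U^N_n$), so the measure inherits invariance under the $n!$—in fact $N!$—coordinate permutations. That $P_N$ restricts correctly to the law of $(X_1,\ldots,X_n)$ follows by unwinding the defining relation $\Lambda(U^N_n g) = \mathcal{E}^N_n(U^N_n g) = \E g(X_1,\ldots,X_n)$ for all bounded measurable $g$: applying this to indicators $g = \mathbf 1_A$ for $A \in \SS^n$ recovers $P_n(A) = P_N(A \times S^{N-n})$ after accounting for the marginalization implicit in $U^N_n \mathbf 1_A$.

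The main obstacle I anticipate is the representation step, not the Hahn--Banach extension itself. The difficulty is that $\mathcal{E}^N_n$ is defined via the averaging operator $U^N_n$, and one must verify it is even well-defined as a functional on $V$ — i.e., that $U^N_n g = U^N_n h$ implies $\E g = \E h$ — which the excerpt flags as the content of Lemma \ref{pcrucial}, resting on the urn-measure theory of Section \ref{urnmeasures}. Beyond that, passing from an abstractly-extended bounded functional on continuous functions to a countably additive, regular, tight \emph{probability} measure on the product space $S^N$ is where the topological hypotheses on $S$ are indispensable: one needs to ensure the extended functional corresponds to a Radon measure and that the marginal $X_1$-regularity propagates to give a well-behaved measure on $S^N$. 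Matching the precise function class on which Hahn--Banach is applied with the class for which Riesz representation yields a probability measure — so that positivity, the norm-one/$\Lambda(1)=1$ normalization, and countable additivity all align — will be the most delicate bookkeeping.
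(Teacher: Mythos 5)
Your necessity direction is exactly the paper's (Lemma \ref{extcharaprop} plus Lemma \ref{necessary}), and your overall architecture for sufficiency --- Hahn--Banach extension of $\mathcal E^N_n$ to a norm-one functional $\Lambda$ with $\Lambda(1)=1$, positivity via the norm-one normalization (the paper's Lemma \ref{Lsimple}), then Riesz representation --- is also the paper's skeleton. But there is a genuine gap at the step you describe as ``unwinding the defining relation $\Lambda(U^N_n g)=\E g(X_1,\ldots,X_n)$ for all bounded measurable $g$'' against the representing measure $P_N$. The Riesz representation theorem only gives you a measure $Q$ with $\Lambda f=\int f\,dQ$ for $f\in C_c(S^N)$; the test functions $U^N_n g$ you need are merely bounded measurable, so the identity $\int U^N_n g\,dQ=\Lambda(U^N_n g)$ does \emph{not} follow from the representation. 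Worse, it can genuinely fail at the level of sets: the set function $A\mapsto\Lambda(\1_A)$ need not be countably additive, and the paper exhibits a concrete counterexample (Remark \ref{Lnotmeas} and \textsection A.2: an extending functional with $\Lambda(\1_D)=0$ on the diagonal $D$ while the representing measure has $Q(D)=1$). So the measure produced by Riesz and the functional produced by Hahn--Banach can disagree on exactly the indicator functions your final marginalization step uses. Note also that your fallback of applying Hahn--Banach inside continuous bounded functions is a nonstarter, since the subspace $U^N_n(b(S^n))$ is not contained in $C_b(S^N)$; the extension must live on $b(S^N)$, which is why the mismatch with $C_c(S^N)$ arises at all.

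This mismatch is precisely what the paper's proof spends its effort on, and it is where each of the two regularity hypotheses enters with a specific job. Tightness is used in Lemma \ref{Lboundedcompacts} (via Urysohn's lemma and the quantitative inequality $\LL\1_{f\le t}\le(1-\LL f)/(1-t)$ of Lemma \ref{propertiesofL}(iii)) to show the restriction of $\Lambda$ to $C_c(S^N)$ still has norm $1$, which is what makes $Q$ a \emph{probability} measure rather than subprobability. Outer regularity of the law of $X_1$ is then used to reconcile $Q$ with $\Lambda$ on the algebra of measurable rectangles: one fattens a rectangle $R$ to an open rectangle $O_\epsilon$, controls $\LL(\1_{O_\epsilon\setminus R})\le N^2\epsilon$ by applying Lemma \ref{propertiesofL}(iii) to $f=U^N_n\1_{F_\epsilon^c}$ with $t=(N-n)/N$, deduces $Q(R)\le\LL(\1_R)$, and upgrades to equality by the complementation trick ($Q$ and $F_\LL$ are both finitely additive with total mass $1$). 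Finally, Lemma \ref{CaraLike} shows agreement on rectangles suffices, because for a rectangle $A\subset S^n$ the function $U^N_n\1_A$ is a finite average of indicators of rectangles $\sigma^{-1}A\subset S^N$. Your proposal correctly flags this ``matching of function classes'' as the delicate point, but flagging it is not resolving it: as written, the plan would stall exactly there, and without the rectangle-by-rectangle outer-regularity argument (or a substitute) the conclusion $P_n(A)=P_N(A\times S^{N-n})$ cannot be extracted from the Riesz measure.
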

The problem of infinite extendibility is addressed next.
That is, given an $n$-exchangeable probability measure that is
$N$-extendible for all $N > n$, is it true that it is infinitely-extendible?
The answer to this may seem ``obviously yes'' and it is so if
$S$ is a Polish space. In the absence of metrizability,
we work with quotient spaces in order to give an affirmative
answer to this question: see Theorem \ref{ine} in Section \ref{infext}.

The paper is organized as follows.
Section \ref{urnmeasures} develops some notation and results related
to urn measures. The main result of this section,
Lemma \ref{uinversion}, is responsible for the
fact that $\mathcal E^N_n$ is well-defined and could be of 
independent interest. 
In Section \ref{symmetrizing} we relate symmetrization operations
to urn measures and are naturally led to the construction of $\mathcal E^N_n$
and its extensions that we call extending functionals.
These are functionals that extend $\mathcal E^N_n$ on the space
$b(S^N)$ of bounded measurable functions $f: S^N \to \R$ and are
symmetric in the sense that their action on $f$ does not depend
on the order of arguments of $f$. Their existence is guaranteed by
the Hahn-Banach theorem.
We then develop several properties of symmetrization
operations and extending functionals
(Lemmas \ref{Uu}--\ref{necessary}).
Lemma \ref{necessary}, in particular, provides the necessity part
of Theorem \ref{ke} and requires no assumptions on $S$.
The results of \textsection\textsection \ref{secsym}, \ref{secpef}, \ref{secext}
 require no assumptions on $S$ neither.
With the view towards establishing the sufficiency part of Theorem \ref{ke}
we assume that $S$ is a locally compact Hausdorff space in \textsection
\ref{critfe} and prove the sufficiency part in this section.
Section \ref{infext} deals with infinite exchangeability, under the same
assumptions on $S$. 
Section \ref{true} gives a condition under which the signed measure
$\nu$ in the representation formula \ref{fre} is a probability measure.
In Section \ref{addi} we give a different version of the main theorem
and some results on persistence of extendibility property
under limits.

\section{Urn measures}
\label{urnmeasures}
Let $S$ be a set and $N$ a positive integer. A point measure $\nu$
is any measure of the form $\nu = \sum_{i=1}^d c_i \delta_{a_i}$,
for $c_i$ nonnegative integers and $a_i \in S$,
where $\delta_a(B)$ is $1$ if $a \in B$, and $0$ if $a \not \in B$,
for $B \subset S$. 
We let $\NN(S)$ be the
collection of all point measures. We let $\NN_N(S)$ be the set
of all $\nu \in \NN(S)$ with total mass $N$.
Point measures are defined on all subsets of $S$.
We write $\nu\{a\}$ for the value of $\nu$ at the
set $\{a\}$ containing the single point $a \in S$.
If $\mu, \nu$ are two point measures, we write $\mu \le \nu$
whenever $\mu(B) \le \nu(B)$ for all $B \subset S$.
If $x =(x_1,\ldots, x_N) \in S^N$ we define its {\it type} to be the
point measure
\[
\epsilon_x := \sum_{i=1}^N \delta_{x_i}.
\]
For $\nu \in \NN_N(S)$, let
\[
S^N(\nu) := \{x \in S^N:\, \epsilon_x=\nu\}.
\]
This is a finite set whose cardinality is denoted by $\binom{N}{\nu}$
and which is easily seen to be given by
\[
\binom{N}{\nu} = \frac{N!}{\prod_{a\in S} \nu\{a\}!}.
\]
The product in the denominator involves only finitely many terms
different from $1$.
Let
\[
u_{N, \nu} := \text{the uniform probability measure on $S^N(\nu)$}.
\]
Let now $n \le N$ be a positive integer, consider the projection
\[
\pi^N_n: (x_1,\ldots,x_n, \ldots, x_N) \mapsto (x_1,\ldots,x_n),
\]
and let
\[
u^N_{n,\nu} := \text{image of $u_{N, \nu}$ under $\pi^N_n$}.
\]
Clearly, $u^N_{N,\nu} = u_{N,\nu}$.
\footnote{The measure $u^N_{n,\nu}$ is called urn measure because of its
probabilistic interpretation: think of the elements of the support
of $\nu$ as colors and consider an urn containing $N$ balls
such that there
are $\nu\{a\}$ balls with color $a$;
make $n$ draws without replacement; then $u^N_{n,\nu}$ is
the probability distribution of the colors drawn.}
The support of $u^N_{n,\nu}$ is the (finite) set of
all $(x_1,\ldots,x_n) \in S^n$ such that 
$\delta_{x_1}+\cdots+\delta_{x_n} \leq \nu$.
If $(x_1,\ldots,x_N) \in S^N(\nu)$ and $(x_1,\ldots,x_n) \in S^n(\mu)$,
where $\mu \le \nu$, we can easily compute the value of $u^N_{n,\nu}$
at the set containing the single point $(a_1,\ldots,a_n) \in S^n(\mu)$
as follows:
\begin{align*}
u^N_{n,\nu}\{(a_1,\ldots,a_n)\} 
&= u_{N,\nu} \{(x_1,\ldots,x_N) \in S^N(\nu):\, x_1=a_1,\ldots,x_n=a_n\}
\\
&= u_{N,\nu} \{(a_1,\ldots,a_n,x_{n+1},\ldots x_N) \in S^N:\, 
\delta_{x_{n+1}}+\cdots+\delta_{x_N} = \nu-\mu\}
\\
&= \frac{|S^{N-n}(\nu-\mu)|}{|S^N(\nu)|}
= \frac{\binom{N-n}{\nu-\mu}}{\binom{N}{\nu}}.
\end{align*}
But $u^n_{n,\mu}$ is the uniform probability measure on $S^n(\mu)$;
so $u^n_{n,\mu}\{(a_1,\ldots,a_n)\} = 1/\binom{n}{\mu}$, and so
the above can be written as
\begin{equation}
\label{udir}
u^N_{n,\nu} = \sum_{\mu \in \NN_n(S)} a(\nu,\mu) \, u^n_{n,\mu},
\end{equation}
where
\begin{equation} \label{mhg}
a(\nu,\mu) := \binom{n}{\mu} \binom{N-n}{\nu-\mu} \bigg/
\binom{N}{\nu},
\end{equation}
with the understanding that $a(\nu,\mu)=0$ if it is not the case
that $\mu \le \nu$.
Simple algebra then shows that
\begin{equation}
\label{udir2}
a(\nu,\mu) =
\frac{\prod_{b \in S} \binom{\nu\{b\}}{\mu\{b\}}}{\binom{N}{n}},
\quad
\nu \in \NN_N(S),\, \mu \in \NN_n(S),\, \mu \le \nu.
\end{equation}
The binomial symbols
in \eqref{udir2} are now standard ones: $\binom{N}{n} = N!/n!(N-n)!$.
We now pass on to the following algebraic fact stating
that for any $\mu \in\NN_n(S)$, and any $N \ge n$, we can express $u^n_{n,\mu}$
as a linear combination of finitely many urn measures $u^N_{n,\nu}$.
\begin{lemma}
\label{uinversion}
Fix $n, N$ be positive integers, $n \le N$
and  $\mu \in \NN_n(S)$. 
Then there exist $c(\mu,\nu) \in \R$, $\nu \in \NN_N(S)$,
such that $c(\mu,\nu)$ is zero for all but finitely many $\nu$ and
\begin{equation}\label{lambdau}
u^n_{n,\mu} = \sum_{\nu \in \NN_N(S)} c(\mu,\nu)\, u^N_{n,\nu}.
\end{equation}
Moreover there exists $K>0$ (depending on $n$ and $N$) such that
\begin{equation}\label{k}
\sup_{\mu \in \NN_n(S)}\sum_{\nu \in \NN_N(S)}|c(\mu,\nu)|<K.
\end{equation}
\end{lemma}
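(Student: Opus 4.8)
The plan is to reduce the statement to a finite-dimensional linear-algebra fact and to read the desired identity \eqref{lambdau} as the existence of a left inverse of the matrix of coefficients $a(\nu,\mu)$ from \eqref{udir}. First I would fix $\mu\in\NN_n(S)$, set $F:=\supp\mu$, and note that $|F|\le n$ since $\mu$ has total mass $n$. Writing $\NN_n(F)$ (resp.\ $\NN_N(F)$) for the point measures of mass $n$ (resp.\ $N$) supported on $F$, all urn measures occurring below are carried by sequences with entries in $F$, so the whole problem lives in the finite-dimensional space of signed measures on $F^n$. The measures $\{u^n_{n,\lambda}:\lambda\in\NN_n(F)\}$ are linearly independent, because the sets $S^n(\lambda)$ are pairwise disjoint and $u^n_{n,\lambda}$ is a probability measure carried by $S^n(\lambda)$. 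Hence \eqref{lambdau} has a solution iff $u^n_{n,\mu}$ lies in the span of $\{u^N_{n,\nu}:\nu\in\NN_N(F)\}$; substituting \eqref{udir} and matching coefficients in the independent family, this is equivalent to the matrix $A=[a(\nu,\mu)]_{\nu\in\NN_N(F),\,\mu\in\NN_n(F)}$ having full column rank, in which case any left inverse furnishes the coefficients $c(\mu,\nu)$, only finitely many of which are nonzero.

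The core step is thus to prove that $A$ has full column rank. Here I would exploit a tower property of urn measures: applying \eqref{udir} with the pair $(N,N-1)$ and pushing forward by the coordinate projection $\pi^{N-1}_n$ (using $\pi^{N-1}_n\comp\pi^N_{N-1}=\pi^N_n$) gives $u^N_{n,\nu}=\sum_{\nu'\in\NN_{N-1}(F)}a(\nu,\nu')\,u^{N-1}_{n,\nu'}$. In matrix form this reads $A^{(N)}=B^{(N)}A^{(N-1)}$, where $A^{(k)}$ is the coefficient matrix for drawing $n$ out of $k$ and $B^{(k)}=[a(\nu,\nu')]_{\nu\in\NN_k(F),\,\nu'\in\NN_{k-1}(F)}$ is the one-step deletion matrix, with base case $A^{(n)}=I$. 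Since a composition of injective (full-column-rank) maps is again injective, it suffices to show that each $B^{(k)}$, $n<k\le N$, is injective.

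For injectivity of $B^{(k)}$ I would use the explicit values from \eqref{udir2}: since $|\nu|=k$, $|\nu'|=k-1$ and $\nu'\le\nu$ forces $\nu'=\nu-\delta_a$ for some $a\in\supp\nu$, one gets $a(\nu,\nu-\delta_a)=\nu\{a\}/k$, so $B^{(k)}\phi=0$ is the system $\sum_{a\in\supp\nu}\nu\{a\}\,\phi(\nu-\delta_a)=0$ for all $\nu\in\NN_k(F)$. To see that this forces $\phi\equiv 0$ I would run a downward induction on the monovariant $\Psi(\lambda):=\sum_{b\in F}\lambda\{b\}^2$. Its maximal value on $\NN_{k-1}(F)$ is attained at the single-colour types $\lambda=(k-1)\delta_a$, for which the choice $\nu=k\delta_a$ isolates $\phi(\lambda)$ and gives $\phi(\lambda)=0$. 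For a general $\lambda$ with at least two colours, pick a colour $a^\ast$ of maximal multiplicity in $\lambda$ and put $\nu=\lambda+\delta_{a^\ast}$; every other child $\nu-\delta_c=\lambda+\delta_{a^\ast}-\delta_c$ satisfies $\Psi(\lambda+\delta_{a^\ast}-\delta_c)-\Psi(\lambda)=2(\lambda\{a^\ast\}-\lambda\{c\}+1)\ge 2>0$, so those terms already vanish by the induction hypothesis and the equation collapses to $(\lambda\{a^\ast\}+1)\,\phi(\lambda)=0$, giving $\phi(\lambda)=0$. This is the step I expect to be the main obstacle; the whole argument hinges on finding a monovariant (here the sum of squared multiplicities, pivoting on a maximal-multiplicity colour) that makes the deletion recursion triangular.

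Finally, for the uniform bound \eqref{k} I would choose the coefficients canonically, e.g.\ taking $c(\mu,\cdot)$ to be the row of the Moore--Penrose pseudo-inverse of $A^{(N)}$ indexed by $\mu$ (extended by $0$ to all $\nu\in\NN_N(S)$). Since $A^{(N)}$ depends only on $N$, $n$ and $d=|F|\le n$, and is equivariant under permuting the colours of $F$, the pseudo-inverse is equivariant as well, so the quantity $\sum_{\nu}|c(\mu,\nu)|$ is invariant under relabelling and therefore depends only on the combinatorial type of $\mu$, i.e.\ on the partition of $n$ recording the multiplicities $\{\mu\{a\}\}$. As there are only finitely many such partitions (and finitely many values of $d$), taking $K$ to be one more than the maximum of these finitely many finite $\ell^1$-norms yields \eqref{k}.
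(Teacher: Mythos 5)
Your proof is correct, but it takes a genuinely different route from the paper's. The paper fixes $T_\mu=\supp\mu$, pads each $\lambda\in\NN_n(T_\mu)$ to $\lambda^N=\lambda+(N-n)\delta_{a_k}$, and observes that the square matrix $[a(\lambda^N,\kappa)]_{\lambda,\kappa\in\NN_n(T_\mu)}$ is triangular with positive diagonal in the lexicographic order, hence invertible; the coefficients come from inverting this one explicit square system, and the bound \eqref{k} is immediate since the construction visibly depends only on the multiset of masses of $\mu$. You instead keep the full rectangular matrix, factor it through one-step deletions $A^{(N)}=B^{(N)}\cdots B^{(n+1)}$ (your tower identity is the measure-level counterpart of Lemma \ref{pp}(ii), obtained by pushing \eqref{udir} for $(N,N-1)$ forward under $\pi^{N-1}_n$), and prove each $B^{(k)}$ injective by downward induction on $\Psi(\lambda)=\sum_b\lambda\{b\}^2$ with a maximal-multiplicity pivot; the computations $a(\nu,\nu-\delta_a)=\nu\{a\}/k$ and $\Psi(\lambda+\delta_{a^*}-\delta_c)-\Psi(\lambda)=2(\lambda\{a^*\}-\lambda\{c\}+1)\ge 2$ both check out, and the induction is well-founded. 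Your route buys structural insight — the whole lemma reduces to injectivity of the single-ball-removal operator, which is of independent combinatorial interest — at the price of non-explicit coefficients, forcing the equivariant Moore--Penrose choice to deduce \eqref{k} from finiteness of partition types, whereas the paper's triangular system yields the type-dependence, hence the uniform bound, for free. One harmless looseness: for a fixed $\mu$, solvability of \eqref{lambdau} is equivalent to $u^n_{n,\mu}$ lying in the row space of $A^{(N)}$, not to full column rank (that is the condition for all $\mu$ simultaneously); since you prove full column rank, nothing is lost.
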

\begin{proof}
Let $\mu \in \NN_n(S)$ and let $T_\mu$ be its support.
Let $k$ be the cardinality of $T_\mu$. Write  $T_\mu=\{a_1,\ldots,a_k\}$ 
and order it in an arbitrary way, say:
\[
a_1 < \cdots < a_k.
\]
This order induces an order on $\NN_n(T_\mu)$, the set of point measures
of mass $n$ supported on $T_\mu$: for $\lambda_1, \lambda_2 \in \NN_n(T_\mu)$,
we write
\[
\lambda_1 \prec \lambda_2
\]
if $(\lambda_1\{a_1\}, \ldots, \lambda_1\{a_{k}\})$
is lexicographically smaller than
$(\lambda_2\{a_1\}, \ldots, \lambda_2\{a_{k}\})$, that is,
\[
\exists~ 1 \le j \le k \text{ such that }
\lambda_1\{a_i\}= \lambda_2\{a_i\},\, 1\le i \le j-1
\text{ and } 
\lambda_1\{a_j\} < \lambda_2\{a_j\}.
\]
This is a total order.
For $\lambda\in \NN_n(T_\mu)$, let $\lambda^N \in \NN_N(S)$ be defined by
\[
\lambda^N = \lambda + (N-n) \delta_{a_k}.
\]
By \eqref{udir},
\[
u^N_{n,\lambda^N} = \sum_{\kappa \in \NN_n(T_\mu)} a(\lambda^N,\kappa) \, u^n_{n,\kappa},
\quad \lambda \in \NN_n(T_\mu).
\]
Observe that the square matrix
$[a(\lambda^N,\kappa)]_{\lambda,\kappa \in \NN_n(T_\mu)}$
has a lower triangular structure with respect
to the lexicographic order,
\[
a(\lambda^N,\kappa)=0 \text{ if } \lambda \prec \kappa,
\]
while 
\[
a(\lambda^N,\lambda)>0.
\]
These follow from \eqref{udir2} and the definitions above.
Therefore the square matrix is invertible and the claim follows.
From the above construction, $\{c(\mu,\nu)\}_{\nu\in \NN_n(S)}$ depends only
on the cardinality of the support of $\mu$ and mass on each atom
in the support. 
So (\ref{k}) follows. 
\end{proof}


\section{Symmetrizing operations and extending functionals}
\label{symmetrizing}
\subsection{Symmetrization of a function.}
\label{secsym}
We say that a function $g(x_1,\ldots, x_k)$ is symmetric
if it is invariant under all $k!$ permutations of its arguments.
From a real-valued function $g(x_1,\ldots,x_n)$ on $S^n$ we produce 
a symmetric function, denoted by $U^N_n g$, on $S^N$, by:
\begin{equation}
\label{UNn}
U^N_n g(x_1,\ldots, x_N) = \frac{1}{(N)_n}
\sum_{\sigma \in \S[n,N]} g(x_{\sigma(1)}, \ldots, x_{\sigma(n)})
\end{equation}
and view $U^N_n$ as a linear operator from the space of
(bounded measurable) functions on $S^n$ into the space
of (bounded measurable) symmetric functions on $S^N$.
To link it to urn measures, let $U^N_{n,x}$ be the probability 
measure on $S^N$ obtained by making $n$ ordered selections without
replacement from an urn containing $N$ balls labelled $x_1, \ldots, x_N$,
at random:
\[
U^N_{n,x} := \frac{1}{(N)_n} 
\sum_{\sigma \in G[n,N]} \delta_{x_{\sigma(1)}, \ldots, x_{\sigma(n)}}.
\]
Thus, $U^N_{n,x}$ is an exchangeable probability measure on $S^n$ 
for all $x \in S^N$.
Clearly,
\[
U^N_n g (x_1,\ldots,x_N) = \int_{S^n} g\, dU^N_{n,x}, 
\quad x=(x_1,\ldots,x_N) \in S^N.
\]
But we observe that 
\begin{lemma}
\label{Uu}
$U^N_{n,x}$ is the same as $u^N_{n, \epsilon_x}$.
\end{lemma}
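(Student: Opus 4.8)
The plan is to show that the two probability measures $U^N_{n,x}$ and $u^N_{n,\epsilon_x}$ on $S^n$ agree by checking that they assign equal mass to every singleton, since both are supported on the same finite subset of $S^n$. First I would fix $x = (x_1,\ldots,x_N) \in S^N$ and set $\nu := \epsilon_x$, so that $\nu \in \NN_N(S)$. The key observation is that $u^N_{n,\nu}$ was \emph{defined} as the image under the projection $\pi^N_n$ of the uniform probability measure $u_{N,\nu}$ on $S^N(\nu)$, whereas $U^N_{n,x}$ is defined directly as the empirical measure of the $(N)_n$ ordered $n$-selections without replacement from the labelled urn $x_1,\ldots,x_N$. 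I would make precise that both are \emph{exchangeable} probability measures on $S^n$ concentrated on the finite set of $n$-tuples $(a_1,\ldots,a_n)$ with $\delta_{a_1}+\cdots+\delta_{a_n} \le \nu$.

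Next I would compute the value of each measure at an arbitrary singleton $\{(a_1,\ldots,a_n)\}$. For $U^N_{n,x}$, unwinding the definition gives
\[
U^N_{n,x}\{(a_1,\ldots,a_n)\}
= \frac{1}{(N)_n}\,
\bigl|\{\sigma \in \S[n,N]:\ x_{\sigma(1)}=a_1,\ldots,x_{\sigma(n)}=a_n\}\bigr|,
\]
that is, the proportion of injections $\sigma$ whose induced selection equals the target tuple. On the other side, $u^N_{n,\nu}\{(a_1,\ldots,a_n)\}$ is, by definition of the image measure, the $u_{N,\nu}$-mass of the set of full-length tuples in $S^N(\nu)$ whose first $n$ coordinates are $a_1,\ldots,a_n$; this is exactly the combinatorial ratio already computed in the display preceding \eqref{udir}, namely $\binom{N-n}{\nu-\mu}/\binom{N}{\nu}$ where $\mu = \delta_{a_1}+\cdots+\delta_{a_n}$. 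The task is then to verify that these two expressions coincide.

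The core of the argument is a bijective/counting identity matching the injection count with the ratio of type-cardinalities. I would argue that choosing an ordered selection $\sigma$ realizing the tuple $(a_1,\ldots,a_n)$ and choosing a full-length $\nu$-tuple in $S^N(\nu)$ with prescribed first $n$ coordinates are equivalent bookkeeping problems: specifying which of the $N$ labelled positions supply the values $a_1,\ldots,a_n$ in order, together with an arbitrary arrangement of the remaining $N-n$ labels (whose type is $\nu - \mu$), accounts for both counts. Concretely, the number of favorable injections is $\bigl(\prod_{a\in S}\nu\{a\}!\bigr)\big/\bigl(\prod_{a\in S}(\nu-\mu)\{a\}!\bigr)$ appropriately, and dividing by $(N)_n$ yields precisely $\binom{N-n}{\nu-\mu}/\binom{N}{\nu}$ after substituting the formula for $\binom{N}{\nu}$. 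Since both measures are probability measures on the same finite support and agree on singletons, they are equal.

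The main obstacle I anticipate is purely bookkeeping: carefully handling repeated labels among $x_1,\ldots,x_N$, so that distinct injections $\sigma$ may produce the same tuple $(a_1,\ldots,a_n)$, and ensuring the multiplicities $\nu\{a\}$ and $(\nu-\mu)\{a\}$ are tracked consistently on both sides. Rather than a delicate obstruction, this is a factorial accounting exercise, and the cleanest route is to invoke the computation already performed in the text for $u^N_{n,\nu}\{(a_1,\ldots,a_n)\}$ and simply match it against the direct injection count for $U^N_{n,x}$, so that no fresh combinatorics beyond \eqref{udir2} is needed.
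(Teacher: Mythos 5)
Your proposal is correct and takes essentially the same approach as the paper: both proofs evaluate the two measures at singletons and match the favorable-injection count $\prod_{a}(\nu\{a\})_{\mu\{a\}}$ (divided by $(N)_n$) against the urn-measure mass computed in Section \ref{urnmeasures}. The only cosmetic difference is that you invoke the image-measure ratio $\binom{N-n}{\nu-\mu}\big/\binom{N}{\nu}$ from the display preceding \eqref{udir} directly, whereas the paper works with the rearranged form \eqref{udir}--\eqref{udir2}; these are algebraically the same factorial identity.
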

\begin{proof}
To see this, use \eqref{udir} and \eqref{udir2} to write
\begin{align*}
u^N_{n, \nu} &= \sum_{\mu\in \NN_n(S)}
\frac{\prod_{a \in S} \binom{\nu\{a\}}{\mu\{a\}}}{\binom{N}{n}} \,
\frac{1}{\binom{n}{\mu}} \sum_{y \in S^n(\mu)} \delta_y,
\\
&= \sum_{\mu\in \NN_n(S)} \frac{\prod_a (\nu\{a\})_{\mu\{a\}}}{(N)_n}
\sum_{y \in S^n(\mu)} \delta_y
\\
&= \sum_{y\in S^n} \sum_{\mu\in \NN_n(S)} \frac{\prod_a (\nu\{a\})_{\mu\{a\}}}{(N)_n}\,
\1\{\epsilon_y=\mu\}\, \delta_y
\\
&= \sum_{y\in S^n}  \frac{\prod_a (\nu\{a\})_{\epsilon_y\{a\}}}{(N)_n}\, \delta_y.
\end{align*}
On the other hand, if $\sigma^*$ is a random element of $\S[n,N]$
with uniform distribution then, for fixed $x=(x_1,\ldots,x_N) \in S^N$
and $y=(y_1,\ldots,y_n)\in S^n$,
\[
U^N_{n,x}\{y\} 
= P\{x_{\sigma^*(1)} = y_1, \ldots, x_{\sigma^*(n)} = y_n\},
\]
which is zero unless $y_i \in \{x_1,\ldots,x_n\}$, that is when
$\epsilon_y \le \epsilon_x$, in which case
\[
P\{x_{\sigma^*(1)} = y_1, \ldots, x_{\sigma^*(n)} = y_n\}
=\frac{\prod_a (\epsilon_x\{a\})_{\epsilon_y\{a\}}}{(N)_n},
\]
and this agrees with $u^N_{n,\nu} \{y\}$ when $\nu=\epsilon_x$.
\end{proof}
We summarize some properties of the operator $U^N_n$ below. 
We denote by $\|g\|$ the sup-norm
of a real-valued function $g: S^n \to \R$.
\begin{lemma} 
Let $g: S^n \to \R$ be a function, $U^N_n$ as in \eqref{UNn},
and $f:S^N \to \R$ a symmetric $\SS^N$-measurable function. Then
\label{pp}
\begin{itemize}
\item[(i)] $\|U^N_n g \| \le \|g\|$.
\item[(ii)] $U^{n_3}_{n_1} = U^{n_3}_{n_2} \comp U^{n_2}_{n_1}$, if
$n_1 \le n_2 \le n_3$.
\item[(iii)] $\|U^N_n g\|$ decreases as $N$ increases.
\item[(iv)] 
The function $f$ is also measurable with respect to the $\sigma$-algebra generated
by $\{U^N_n h\}$ where $h$ ranges over all measurable function from
$S^n$ into $\R$.
\end{itemize}
\end{lemma}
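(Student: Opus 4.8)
The plan is to treat the four parts in the order (i), (ii), (iii), (iv): parts (i)--(iii) are short and essentially formal, (iii) being a consequence of (i) and (ii), while (iv) is the only part requiring real work and is where I expect the main obstacle to lie. For (i), I would use the probabilistic reading $U^N_n g(x)=\int_{S^n} g\,dU^N_{n,x}$ recorded just above the statement: since $U^N_{n,x}$ is a probability measure, $|U^N_n g(x)|\le \int_{S^n}|g|\,dU^N_{n,x}\le\|g\|$ for every $x\in S^N$, and taking the supremum over $x$ gives $\|U^N_n g\|\le\|g\|$. (Equivalently, $U^N_n g(x)$ is a finite average of $(N)_n$ values of $g$, each of modulus at most $\|g\|$.)

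For (ii) I would prove the tower identity $U^{n_3}_{n_1}=U^{n_3}_{n_2}\comp U^{n_2}_{n_1}$ directly from the definition \eqref{UNn}. Expanding the right-hand side yields a double sum over $\rho\in\S[n_2,n_3]$ and $\tau\in\S[n_1,n_2]$ of the terms $g(x_{\rho(\tau(1))},\ldots,x_{\rho(\tau(n_1))})$, which I would reindex by the composition $\sigma:=\rho\comp\tau\in\S[n_1,n_3]$. The key step is to count, for each fixed $\sigma$, the number of pairs $(\rho,\tau)$ with $\rho\comp\tau=\sigma$: one must have the image of $\sigma$ contained in the image of $\rho$, so one chooses the $n_2$-element image of $\rho$ as any superset of the $n_1$-element image of $\sigma$, giving $\binom{n_3-n_1}{n_2-n_1}$ choices, then chooses the bijection $\rho$ onto that image in $n_2!$ ways, after which $\tau=\rho^{-1}\comp\sigma$ is forced. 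The resulting multiplicity $\binom{n_3-n_1}{n_2-n_1}\,n_2!$ combines with the normalizations $(n_3)_{n_2}(n_2)_{n_1}$ to give exactly $1/(n_3)_{n_1}$, the coefficient appearing in $U^{n_3}_{n_1}$. This is just the consistency of sampling without replacement (drawing $n_1$ out of $n_3$ directly equals drawing $n_2$ first and then $n_1$ of those), and the only nonroutine point is the bookkeeping of this multiplicity.

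For (iii) I would observe that for $N'>N\ge n$, part (ii) with $(n_1,n_2,n_3)=(n,N,N')$ gives $U^{N'}_n g=U^{N'}_N(U^N_n g)$; then part (i), applied to the contraction $U^{N'}_N$ acting on $U^N_n g\in b(S^N)$, yields $\|U^{N'}_n g\|=\|U^{N'}_N(U^N_n g)\|\le\|U^N_n g\|$, i.e.\ nonincrease in $N$.

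The heart of the lemma, and the step I expect to be the main obstacle, is (iv). Write $\GG$ for the $\sigma$-algebra generated by $\{U^N_n h:h\in b(S^n)\}$ and $\epsilon\colon S^N\to\NN_N(S)$, $x\mapsto\epsilon_x$, for the type map. The plan has three stages. First, every $U^N_n h$ is symmetric and $\SS^N$-measurable, so $\GG$ is contained in the symmetric $\sigma$-algebra. Second, taking $h(y_1,\ldots,y_n)=\1_B(y_1)$ and using that exactly $(N-1)_{n-1}$ injections in $\S[n,N]$ send $1$ to a prescribed index, one computes $U^N_n h=U^N_1\1_B=\tfrac1N\epsilon_\cdot(B)$; hence $\GG\supseteq\sigma(\{\epsilon_\cdot(B):B\in\SS\})=\sigma(\epsilon)$. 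Third, and this is the crux, I must show that every symmetric $\SS^N$-measurable $f$ is $\sigma(\epsilon)$-measurable. Since a symmetric $f$ satisfies $f=U^N_N f$, it suffices to prove that $U^N_N f$ is $\sigma(\epsilon)$-measurable for every bounded measurable $f$, which I would obtain by a functional monotone class argument: the class of bounded $f$ with $U^N_N f$ being $\sigma(\epsilon)$-measurable is a vector space containing constants and closed under bounded monotone limits, and it contains the multiplicative generating class of product indicators $\prod_{i=1}^N\1_{B_i}(x_i)$, because $U^N_N(\prod_i\1_{B_i})$ is (up to the factor $1/N!$) the number of bijective matchings of slots to coordinates with coordinate $x_{\sigma(i)}\in B_i$, a permanent-type count that is invariant under permuting coordinates and hence expressible through the counts $\epsilon_x(\bigcap_{i\in I}B_i)$, $I\subseteq\{1,\ldots,N\}$, and therefore $\sigma(\epsilon)$-measurable. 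The monotone class theorem then delivers the claim for all bounded $\SS^N$-measurable $f$, hence for symmetric $f$, so $f$ is $\sigma(\epsilon)$-measurable and a fortiori $\GG$-measurable. The subtlety to handle carefully here is precisely this last measurability: one should avoid asserting that the quotient $\sigma$-algebra on $\NN_N(S)$ coincides with the one generated by the evaluation maps, and instead verify $\sigma(\epsilon)$-measurability of the symmetrized generators directly, as above, which requires no topological assumption on $S$.
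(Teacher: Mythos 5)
Your proof is correct on all four parts, and on the two substantive ones it takes a genuinely different route from the paper. For (i) and (iii) you match the paper exactly ((i) is ``immediate''; (iii) is (i) applied to $U^{N'}_N$ after (ii)). For (ii) the paper argues via urn measures --- $u^{n_3}_{n_1,\nu}$ is the image of the uniform measure on $S^{n_3}(\nu)$ under successive projections, i.e.\ consistency of sampling without replacement --- whereas you expand the composition directly and count, for fixed $\sigma\in\S[n_1,n_3]$, the pairs $(\rho,\tau)$ with $\rho\comp\tau=\sigma$; your multiplicity is right, since $\binom{n_3-n_1}{n_2-n_1}n_2!\big/\bigl((n_3)_{n_2}(n_2)_{n_1}\bigr)=1/(n_3)_{n_1}$, so the two proofs are equivalent in content, yours more hands-on, the paper's more conceptual. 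The real divergence is (iv). The paper's proof is a three-line sketch: it claims it suffices to treat $g=\1_B(x_1)\cdots\1_B(x_N)$ and observes $\1_{B^N}=\1\{U^N_n\1_{B^n}=1\}$. Read literally, that reduction is insufficient: the sets $B^N$, $B\in\SS$, generate strictly less than the symmetric $\sigma$-algebra (take $S=\{a,b,c\}$, $N=3$: the points $(a,a,b)$ and $(a,b,b)$ lie in exactly the same sets $B^3$, since membership in $B^3$ depends only on the range of the point, yet they have distinct orbits; so the orbit of $(a,a,b)$ is symmetric and measurable but not measurable for $\sigma(\{B^3:B\in\SS\})$). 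Your three-stage argument --- first $\GG\supseteq\sigma(\epsilon)$ via the computation $U^N_n\bigl(\1_B(y_1)\bigr)=\epsilon_\cdot(B)/N$, then $f=U^N_Nf$ for symmetric $f$, then a functional monotone class argument whose generating case is the permanent-type count $U^N_N\bigl(\prod_i\1_{B_i}\bigr)$, a function of the counts $\epsilon_x\bigl(\bigcap_{i\in I}B_i\bigr)$ by inclusion--exclusion over the atoms of the algebra generated by $B_1,\ldots,B_N$ --- supplies exactly the step the paper glosses over, and it needs no hypotheses on $S$, so it is not merely an alternative but a completion of the paper's argument. One cosmetic point: the lemma's $f$ is not assumed bounded, so finish by truncation ($(-m)\vee(f\wedge m)$ is symmetric, bounded, and $\sigma(\epsilon)$-measurability passes to the pointwise limit as $m\to\infty$).
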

\begin{proof}
(i) Immediate from the definition.
\\
(ii) It follows from the fact that $u^N_{n,\nu}$ is the image of
the uniform measure on $S^N(\nu)$ under the projection $S^N \to S^n$.
\\
(iii) It follows from (i) and (ii).
\\
(iv)
It suffices to show this for $g(x_1,\ldots,x_N)= \1_B(x_1) \cdots \1_B(x_N)$,
where $B \in \SS$.
Let $f=U^N_n \1_{B^n}$. Then $g=1$ if and only if $f=1$.
Hence $g$ is a measurable function of $f$.
\end{proof}

If we take a measurable function $g: S^n \to \R$ whose symmetrization
$U^n_n g$ is identically zero then, clearly, $g(X_1,\ldots,X_n)=0$, a.s.,
if $X=(X_1,\ldots,X_n)$ has exchangeable law.
Now suppose $N > n$ and assume $U^N_n g(x_1,\ldots,x_N)$ is the identically
zero function. Again, the conclusion that $g(X_1,\ldots,X_n)=0$, a.s.,
holds true. This is the content of the next lemma.
\begin{lemma}
\label{pcrucial}
Let $g:S^n \to \R$ be a measurable function such that, for some $N \ge n$,
\[
U^N_n g(x_1,\ldots, x_N) = 0, \text{ for all } (x_1,\ldots,x_N)\in S^N.
\]
Then, if $X=(X_1,\ldots,X_n)$ is an exchangeable random element of $S^n$,
we have 
\[
g(X_1,\ldots,X_n) = 0, \text{ a.s.}
\]
\end{lemma}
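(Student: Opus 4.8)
The plan is to show first that the hypothesis, although stated for an arbitrary $N\ge n$, carries no more information than the vanishing of the full $n$-fold symmetrization of $g$, and only then to feed this into exchangeability. I would begin by recasting the assumption through urn measures. By Lemma \ref{Uu}, for every $x\in S^N$ we have $U^N_n g(x)=\int_{S^n} g\, dU^N_{n,x}=\int_{S^n} g\, du^N_{n,\epsilon_x}$, and since every $\nu\in\NN_N(S)$ arises as the type $\epsilon_x$ of some $x\in S^N$, the hypothesis $U^N_n g\equiv 0$ is exactly the collection of scalar identities $\int_{S^n} g\, du^N_{n,\nu}=0$ for all $\nu\in\NN_N(S)$.

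The next step is an inversion. Lemma \ref{uinversion} writes each $n$-level urn measure as a finite signed combination of the $N$-level ones, $u^n_{n,\mu}=\sum_{\nu\in\NN_N(S)} c(\mu,\nu)\, u^N_{n,\nu}$, so integrating $g$ and using the identities just obtained yields $\int_{S^n} g\, du^n_{n,\mu}=\sum_{\nu} c(\mu,\nu)\int_{S^n} g\, du^N_{n,\nu}=0$ for every $\mu\in\NN_n(S)$. Since $u^n_{n,\mu}$ is the uniform measure on the orbit $S^n(\mu)$, a further application of Lemma \ref{Uu} identifies $\int g\, du^n_{n,\mu}$ with $U^n_n g$ evaluated at any point of type $\mu$; hence $U^n_n g\equiv 0$. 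In words, the average of $g$ over every orbit of coordinate-permutations vanishes, and the problem is reduced to the case $N=n$.

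Now exchangeability enters. For each permutation $\sigma\in\S[n]$ one has $g(X_{\sigma(1)},\ldots,X_{\sigma(n)})\eqdist g(X_1,\ldots,X_n)$, and since $U^n_n g\equiv 0$ the orbit-average $U^n_n g(X)=\tfrac{1}{n!}\sum_{\sigma\in\S[n]} g(X_{\sigma(1)},\ldots,X_{\sigma(n)})$ is zero almost surely. To turn this into a statement about $g(X)$ itself I would disintegrate the law of $X$ over the $\sigma$-algebra $\II$ of permutation-invariant measurable subsets of $S^n$: on each orbit an exchangeable law is uniform, so the regular conditional law of $X$ given $\II$ is the urn measure carried by that orbit, and therefore $\E[\,g(X)\mid\II\,]=U^n_n g(X)=0$ almost surely.

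The step I expect to be the main obstacle is exactly this upgrade, from the vanishing of the conditional mean $\E[\,g(X)\mid\II\,]$ to the almost-sure vanishing of $g(X)$ required by the lemma, since $U^n_n$ controls $g$ only in average over each orbit and all of the remaining force must come from how the exchangeable law spreads its mass within an orbit. The decisive ingredient is the identity $\E[\,g(X)\mid\II\,]=U^n_n g(X)=0$ together with the orbit-wise structure supplied by the urn measures $u^n_{n,\mu}$, from which the almost-sure vanishing $g(X)=0$ is to be extracted. This final reduction is the delicate part of the plan, and the only place where more than the bare expectation identity $\E g(X)=\E\,U^n_n g(X)=0$ is needed.
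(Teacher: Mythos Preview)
Your reduction from $U^N_n g\equiv 0$ to $U^n_n g\equiv 0$ via Lemmas \ref{Uu} and \ref{uinversion} is exactly the paper's argument, and it is correct. You are also right to flag the final upgrade---from $\E[\,g(X)\mid\II\,]=U^n_n g(X)=0$ to $g(X)=0$ a.s.---as the delicate point. In fact this step cannot be completed: the almost-sure conclusion is false as stated. Take $S=\{0,1\}$, $n=N=2$, $g(x,y)=x-y$, and let $X$ be uniform on $\{(0,1),(1,0)\}$. Then $U^2_2 g\equiv 0$ and $X$ is exchangeable, yet $g(X)\in\{-1,1\}$ almost surely. So no argument can close the gap you identify, and your instinct that ``more than the bare expectation identity'' is required was exactly on target.

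The paper's own proof writes ``$g(X)\eqdist U^n_{n,X}\,g=0$'' at this step; with the paper's definitions $U^n_{n,X}\,g=\int g\,dU^n_{n,X}=U^n_n g(X)$, so this reads $g(X)\eqdist U^n_n g(X)$, and the counterexample above shows that assertion is incorrect. What does survive, and is all that the paper ever uses afterwards (namely, to show that $\mathcal E^N_n$ in \eqref{pef} is well-defined), is the weaker conclusion $\E g(X)=0$. This follows immediately from $U^n_n g\equiv 0$ and exchangeability via $\E g(X)=\E\,U^n_n g(X)=0$, a computation you already have. So your proposal in fact establishes everything the paper needs; it is the lemma's statement, not your plan, that overshoots.
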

\begin{proof}
Since $U^N_n g(x)$ is obtained by integrating $g$ with
respect to the measure $U^N_{n,x}$, the assumption that $U^N_n g$
is identically equal to zero implies, due to Lemma \ref{Uu}, that
\[
\int_{S^n} g\, du^N_{n,\nu} = 0, \text{ for all $\nu \in \NN_N(S)$}.
\]
By Lemma \ref{uinversion} this implies that
\[
\int_{S^n} g\, d u^n_{n,\mu}=0, \text{ for all $\mu \in \NN_n(S)$}.
\]
Let $x=(x_1,\ldots,x_n)$.
Set $\mu=\epsilon_x$ in the last display and use again Lemma \ref{Uu}
to obtain
\[
U^n_{n,x} g =0, \text{ for all } x \in S^n.
\]
Suppose now that $X=(X_1,\ldots,X_n)$ has exchangeable law. 
Then
\[
g(X) \eqdist U^n_{n,X} g =0,
\]
and so $g(X)=0$, a.s.
\end{proof}

\subsection{The primitive extending functional.}
\label{secpef}
We come now to the definition of the main object of the paper.
Given an $n$-exchangeable $X=(X_1,\ldots, X_n)$, and $N \ge n$,
the result of Lemma \ref{pcrucial} tells us that the assignment
\begin{equation}
\label{pef}
\mathcal E^N_n : U^N_n g \mapsto \E g(X_1,\ldots, X_n),
\end{equation}
for $g: S^n \to \R$ bounded measurable,
is a well-defined function.
Indeed, if $U^N_n g = U^N_n h$ then $U^N_n (g-h)=0$ and
so, by Lemma \ref{pcrucial}, $\E g(X)=\E h(X)$.
Let $b(S^n)$ be the Banach space of bounded measurable real-valued functions
$g: S^n \to \R$ equipped with the sup norm.
We call $\mathcal E^N_n$ {\em primitive extending functional}.
The norm of $\mathcal E^N_n$ induced by the sup norm on $b(S^n)$ is
\begin{equation}
\label{Enorm}
\|\mathcal E^N_n\| =  \sup_{g \in b(S^n), g \neq 0}
\frac{|\E g(X_1,\ldots,X_n)|}{\|U^N_n g\|}.
\end{equation}
\begin{lemma}
$\mathcal E^N_n$ is a bounded linear map from $U^N_n(b(S^n))$ into $\R$
with norm at least $1$.
\end{lemma}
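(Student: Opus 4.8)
The plan is to treat the three assertions---linearity, boundedness, and the lower bound $\|\mathcal E^N_n\| \ge 1$---separately, with boundedness being the only substantive point. Well-definedness of the map on $U^N_n(b(S^n))$ is already guaranteed by the discussion following \eqref{pef} (which invokes Lemma \ref{pcrucial}), so I may take it for granted.

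Linearity I would dispose of first, since it is immediate. Every element of $U^N_n(b(S^n))$ has the form $U^N_n g$, and because $U^N_n$ is a linear operator, for scalars $a,b$ and $g_1,g_2 \in b(S^n)$ one has $a U^N_n g_1 + b U^N_n g_2 = U^N_n(a g_1 + b g_2)$. Applying the definition \eqref{pef} and linearity of expectation then gives $\mathcal E^N_n(a U^N_n g_1 + b U^N_n g_2) = \E(a g_1 + b g_2)(X) = a\,\mathcal E^N_n(U^N_n g_1) + b\,\mathcal E^N_n(U^N_n g_2)$.

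For the lower bound I would simply test against the constant function $g \equiv 1$ on $S^n$. Then $U^N_n g \equiv 1$ on $S^N$, so $\|U^N_n g\| = 1$, while $\E g(X) = 1$; substituting into the expression \eqref{Enorm} for the norm yields $\|\mathcal E^N_n\| \ge 1$.

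The heart of the matter is boundedness---that is, controlling $|\E g(X)|$ by $\|U^N_n g\|$---and this is exactly where Lemma \ref{uinversion} enters. I would start from the identity $\E g(X) = \E[U^n_{n,X} g]$, valid by exchangeability of $X$ as in the proof of Lemma \ref{pcrucial}. Fixing $x \in S^n$ and setting $\mu = \epsilon_x$, Lemma \ref{Uu} identifies $U^n_{n,x} g$ with $\int g\, d u^n_{n,\mu}$, and the inversion formula \eqref{lambdau} expands this as $\sum_\nu c(\mu,\nu) \int g\, d u^N_{n,\nu}$. By Lemma \ref{Uu} once more, each integral $\int g\, d u^N_{n,\nu}$ equals the value of $U^N_n g$ at any point of $S^N(\nu)$ and is therefore bounded in modulus by $\|U^N_n g\|$. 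Combining this with the uniform coefficient bound \eqref{k} gives $|U^n_{n,x} g| \le K\,\|U^N_n g\|$ for every $x$, with $K$ independent of both $x$ and $g$; taking expectations yields $|\E g(X)| \le K\,\|U^N_n g\|$ and hence $\|\mathcal E^N_n\| \le K < \infty$. The step I expect to carry the real weight is precisely the appeal to \eqref{k}: without the \emph{uniform} control over $\sum_\nu |c(\mu,\nu)|$, the bound would depend on $\mu = \epsilon_X$ and might fail to survive the expectation over $X$, so the finiteness of the norm rests squarely on that part of Lemma \ref{uinversion}.
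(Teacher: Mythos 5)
Your proposal is correct and follows essentially the same route as the paper's proof: the same identity $\E g(X) = \E\int g\,d u^n_{n,\epsilon_X}$ via exchangeability and Lemma \ref{Uu}, the same expansion through the inversion formula of Lemma \ref{uinversion} with the uniform coefficient bound \eqref{k} supplying the constant $K$, and the same test against $g \equiv 1$ for the lower bound. You also correctly identify \eqref{k} as the load-bearing step, which matches the role it plays in the paper's argument.
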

\begin{proof}
Linearity is immediate.
Boundedness of $\mathcal E^N_n$ is tantamount to the existence of $K<\infty$
such that $|\E g(X)| \le K \|U^N_n g\|$ for all bounded measurable
$g:S^n\to \R$. 
By exchangeability and Lemma \ref{Uu} we have 
\[
\E g(X) = \E U^n_n g(X) = \E \int_{S^n} g\, d u^n_{n,\epsilon_X}.
\]
By Lemma \ref{uinversion}, 
\[
\left|\int_{S^n} g\, d u^n_{n,\mu}\right|
= \left|\sum_\nu c(\mu,\nu)\, \int_{S^n} g\, du^N_{n,\nu}\right|
\le \|U^N_n g\| \, \sum_\nu |c(\mu,\nu)|.
\]
The reason for the latter inequality is that 
\[
\sup_{\nu \in \NN_N(S)} \left| \int_{S^n} g\, du^N_{n,\nu}\right|
= \sup_{x\in S^N} \left| \int_{S^n} g\, dU^N_{n,x}\right|
= \sup_{x\in S^N} |U^N_n g(x)| = \|U^N_n g\|.
\]
Therefore, $|\E g(X)| \le \E \sum_\nu |c(\epsilon_X,\nu)| \, 
\|U^N_n g\|< K\, \|U^N_n g\|$ with $K$ in (\ref{k}).
To see that $\|\mathcal E^N_n \| \ge 1$, quite simply notice that
for $g(x_1,\ldots,x_n)=1$ the ratio inside the supremum 
in \eqref{Enorm} equals 1.
\end{proof}

That extendibility is captured by $\mathcal E^N_n$ is a consequence of
the following two simple lemmas. The first is a straightforward
rewriting of the definition of extendibility
\cite[Prop.\ 1.4]{spi82}.
\begin{lemma}
\label{extcharaprop}
Fix $N \ge n$.
An exchangeable random element $X=(X_1,\ldots,X_n)$  of $S^n$
is $N$-extendible if and only if there is an exchangeable probability
measure $Q$ on $S^N$ such that
\begin{equation}
\label{extchara}
\E g(X_1,\ldots,X_n) = \int_{S^N} (U^N_n g) \, dQ,
\end{equation}
for all bounded measurable $g: S^n \to \R$.
\end{lemma}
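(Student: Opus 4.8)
The plan is to reduce both implications to a single computational identity and then match up definitions. The one fact doing all the work is the following: for every exchangeable probability measure $Q$ on $S^N$ and every $g \in b(S^n)$,
\[
\int_{S^N} U^N_n g \, dQ = \int_{S^n} g \, d(Q \comp (\pi^N_n)^{-1}),
\]
where $Q \comp (\pi^N_n)^{-1}$ denotes the image of $Q$ under the projection $\pi^N_n$, i.e.\ the $n$-dimensional marginal of $Q$. First I would establish this. Expanding $U^N_n g$ according to \eqref{UNn} and integrating term by term (legitimate since $\|U^N_n g\| \le \|g\| < \infty$ by Lemma \ref{pp}(i)),
\[
\int_{S^N} U^N_n g \, dQ = \frac{1}{(N)_n} \sum_{\sigma \in \S[n,N]} \int_{S^N} g(x_{\sigma(1)},\ldots,x_{\sigma(n)}) \, Q(dx).
\]
Each injection $\sigma \in \S[n,N]$ extends to a permutation of $\{1,\ldots,N\}$, and since $Q$ is invariant under all of $\S[N]$, the law of $(x_{\sigma(1)},\ldots,x_{\sigma(n)})$ under $Q$ is the same for every $\sigma$ and coincides with $Q \comp (\pi^N_n)^{-1}$. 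Hence each of the $(N)_n$ summands equals $\int_{S^n} g \, d(Q\comp(\pi^N_n)^{-1})$, and the identity follows.

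Granting the identity, the two directions are immediate. For necessity, suppose $X$ is $N$-extendible and let $Q$ be the law of an exchangeable extension $(Y_1,\ldots,Y_N)$; by definition $Q\comp(\pi^N_n)^{-1}$ is the law of $(X_1,\ldots,X_n)$, so $\int_{S^n} g\, d(Q\comp(\pi^N_n)^{-1}) = \E g(X_1,\ldots,X_n)$, and the identity converts this into \eqref{extchara}. For sufficiency, suppose \eqref{extchara} holds for some exchangeable $Q$ and all $g \in b(S^n)$; combining with the identity gives $\int_{S^n} g\, d(Q\comp(\pi^N_n)^{-1}) = \E g(X_1,\ldots,X_n)$ for every such $g$. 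Taking $g = \1_A$ with $A \in \SS^n$ yields $(Q\comp(\pi^N_n)^{-1})(A) = \P(X \in A)$, so the $n$-marginal of $Q$ equals the law of $X$. Thus $Q$ is an exchangeable probability measure on $S^N$ extending the law of $X$, which is precisely $N$-extendibility.

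I do not expect a genuine obstacle, since the statement is a bookkeeping reformulation of the definition. The only point requiring care is the exchangeability step inside the identity: one must use that an \emph{arbitrary} injection in $\S[n,N]$ (not merely an order-preserving one) completes to a full permutation in $\S[N]$, so that invariance of $Q$ under $\S[N]$ forces all $(N)_n$ ordered selections to share the common marginal $Q\comp(\pi^N_n)^{-1}$. Everything else is definition-chasing, and no topological hypotheses on $S$ are needed.
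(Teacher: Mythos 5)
Your proposal is correct and follows essentially the same route as the paper: the paper's proof uses exactly your key identity, in the form that exchangeability of $Q$ forces $(Y_{\sigma(1)},\ldots,Y_{\sigma(n)})\eqdist(Y_1,\ldots,Y_n)$ for every injection $\sigma\in\S[n,N]$ (each extending to a full permutation), so that $\int_{S^N}(U^N_n g)\,dQ=\E g(Y_1,\ldots,Y_n)$, and both directions then reduce to matching the $n$-marginal of $Q$ with the law of $X$. You merely isolate this identity as an explicit displayed lemma and finish the sufficiency direction with indicators $g=\1_A$, which the paper leaves implicit; no substantive difference.
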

\begin{proof}
If $X=(X_1,\ldots,X_n)$ is $N$-extendible there is exchangeable
random element $Y=(Y_1,\ldots,Y_N)$ of $S^N$ such that
$(X_1,\ldots,X_n) \eqdist (Y_1,\ldots,Y_n) 
\eqdist (Y_{\sigma(1)},\ldots,Y_{\sigma(n)})$ for 
all $\sigma \in \S[n,N]$.
Therefore \eqref{extchara} holds with $Q$ the law of $Y$.
Conversely, if \eqref{extchara} holds, let
$(Y_1,\ldots, Y_N)$ be a random element of $S^N$ with law $Q$.
Since $Q$ is exchangeable, the right-hand side of \eqref{extchara}
is equal to $\E g(Y_1,\ldots, Y_n)$. Hence
$\E g(X_1,\ldots,X_n) = \E g(Y_1,\ldots, Y_n)$
for all bounded measurable $g: S^n \to \R$ and this means that
$X$ is $N$-extendible.
\end{proof}
The second lemma is a necessary condition for extendibility
in terms of the norm of $\mathcal E^N_n$:
\begin{lemma}\label{necessary}
If the exchangeable random element $X=(X_1,\ldots,X_n)$ of $S^n$
is $N$-extendible then $\|\mathcal E^N_n\|=1$, where
$\mathcal E^N_n$ is the functional defined by \eqref{pef}.
\end{lemma}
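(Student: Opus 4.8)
The plan is to establish the matching upper bound $\|\mathcal E^N_n\| \le 1$ and combine it with the lower bound $\|\mathcal E^N_n\| \ge 1$ already proved in the preceding lemma, yielding equality.

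First I would invoke Lemma \ref{extcharaprop}, which translates $N$-extendibility into the existence of an exchangeable probability measure $Q$ on $S^N$ satisfying
\[
\E g(X_1,\ldots,X_n) = \int_{S^N} (U^N_n g)\, dQ
\]
for every bounded measurable $g: S^n \to \R$. This representation is exactly the tool that converts the abstract extendibility hypothesis into something against which the norm can be estimated.

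The crux is then a one-line estimate exploiting that $Q$ is a \emph{probability} measure, so that integrating against $Q$ cannot increase the sup-norm of the integrand:
\[
|\E g(X_1,\ldots,X_n)| = \left| \int_{S^N} (U^N_n g)\, dQ \right|
\le \int_{S^N} |U^N_n g|\, dQ \le \|U^N_n g\|.
\]
Dividing by $\|U^N_n g\|$, taking the supremum over all $g \in b(S^n)$ with $U^N_n g \neq 0$, and recalling the definition \eqref{Enorm} of the norm gives $\|\mathcal E^N_n\| \le 1$. Since the previous lemma already shows $\|\mathcal E^N_n\| \ge 1$ (taking $g \equiv 1$), we conclude $\|\mathcal E^N_n\|=1$.

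I do not expect any genuine obstacle here: the entire substance is carried by Lemma \ref{extcharaprop}, and the remaining step is merely the triangle inequality together with the normalization $Q(S^N)=1$. The only point worth flagging is that the supremum in \eqref{Enorm} is understood over $g$ with $U^N_n g \neq 0$, which is harmless because $U^N_n g = 0$ forces $\E g(X)=0$ by Lemma \ref{pcrucial}, so such $g$ contribute nothing to the norm of the functional on its domain $U^N_n(b(S^n))$.
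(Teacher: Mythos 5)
Your proof is correct and follows exactly the paper's own argument: apply Lemma \ref{extcharaprop} to obtain the exchangeable probability measure $Q$, use $Q(S^N)=1$ to bound $|\E g(X)| \le \|U^N_n g\|$, and combine the resulting inequality $\|\mathcal E^N_n\| \le 1$ with the lower bound $\|\mathcal E^N_n\| \ge 1$ from the preceding lemma. Your closing remark about $g$ with $U^N_n g = 0$ being harmless (via Lemma \ref{pcrucial}) is a correct and welcome clarification of a point the paper leaves implicit.
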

\begin{proof}
Assuming $N$-extendibility, by Lemma \ref{extcharaprop} there is
an exchangeable probability measure $Q$ on $S^N$ such that
\eqref{extchara} holds. Then
\[
|\E g(X)| = \bigg|\int_{S^N} (U^N_n g)\, d Q\bigg|
\le\|U^N_n g\|,
\]
for all bounded measurable $g:S^n \to \R$.
Thus $|\mathcal E^N_n f| \le \|f \|$ for all $f$ in the domain
of $\mathcal E^N_n$. So $\|\mathcal E^N_n \|\le 1$.
Since $\|\mathcal E^N_n \|\ge 1$, we actually have equality.
\end{proof}


\subsection{Extending functionals and their properties.}
\label{secext}
Throughout, $n, N$ are fixed positive integers, $n \le N$,
and $X=(X_1,\ldots,X_n)$ is an exchangeable random element of $S^n$.
The primitive extending functional of \eqref{pef} depends on $n$, $N$
and the law of $(X_1,\ldots,X_n)$. As $N$ and $n$ are for now kept 
fixed we denote $\mathcal E^N_n$ simply by $\mathcal E$.
We pass on to a more general object than  $\mathcal E$.

An operator defined on $b(S^N)$ is called symmetric
if, whenever $\sigma$ is a permutation of $\{1,\ldots,N\}$
and $f \in b(S^N)$, its value at $\sigma f$ does not depend on $\sigma$,
where $\sigma f(x_1,\ldots,x_N)= f(x_{\sigma(1)},\ldots, x_{\sigma(N)})$.

\begin{definition}
We call extending functional
any symmetric linear functional
$\mathcal L : b(S^N) \to \R$  agreeing with $\mathcal E$ on the domain
of the latter and such that $\|\mathcal L \| = \|\mathcal E\|$. 
\end{definition}
We shall need the following simple lemma.
When $V$ is a linear subspace of $b(S^N)$ and $f\in V$ we write $\|f\|_V =
\sup_{x \in S^N} |f(x)|$ and if $\LL:V \to \R$, we write 
$\|\LL\|_V = \sup_{\|f\|_V \le 1} |\LL(f)|$.
\begin{lemma}
\label{Lsimple}
Let $V$ be a linear subspace of $b(S^N)$ containing constant functions
and $\LL: V \to \R$ a linear functional such that $\LL(1)=1$ and
$\|\LL\|_V=1$. Then $\LL$ is monotone: If $f, g \in V$, $f \le g$ pointwise,
then $\LL f \le \LL g$.
\end{lemma}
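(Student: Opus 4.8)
The statement to prove is Lemma \ref{Lsimple}: a unit-norm linear functional $\LL$ on a subspace $V \ni 1$ with $\LL(1)=1$ is monotone. The plan is to reduce monotonicity to the single inequality $\LL(h) \ge 0$ whenever $h \in V$ satisfies $h \ge 0$ pointwise, since then for $f \le g$ we set $h = g - f \ge 0$ (which lies in $V$ by linearity) and conclude $\LL(g) - \LL(f) = \LL(h) \ge 0$. So everything comes down to showing that nonnegative functions have nonnegative $\LL$-value.

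**Key step.** To prove $\LL(h) \ge 0$ for $0 \le h$, the idea is to exploit that $\|\LL\|_V = 1$ together with $\LL(1)=1$ forces $\LL$ to ``sit inside'' the range of $h$. Suppose $h \in V$, $h \ge 0$, and without loss of generality $h$ is not identically zero. Write $M = \|h\|_V = \sup_x |h(x)| = \sup_x h(x)$ (the last equality because $h \ge 0$). Consider the shifted function $M - h$, which again lies in $V$ since $V$ contains constants and is linear. Because $0 \le h(x) \le M$ for all $x$, we have $0 \le M - h(x) \le M$, hence $\|M - h\|_V \le M$. Now apply the norm bound:
\[
|\LL(M - h)| \le \|\LL\|_V \, \|M - h\|_V \le M.
\]
But $\LL(M - h) = M\,\LL(1) - \LL(h) = M - \LL(h)$, using $\LL(1)=1$. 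Therefore $|M - \LL(h)| \le M$, which gives $M - \LL(h) \le M$, i.e. $\LL(h) \ge 0$, exactly as wanted.

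**Obstacle.** The argument is almost entirely routine once one spots that the right object to test the norm on is $M - h$ rather than $h$ itself; the only mild subtlety is making sure $M - h \in V$ (guaranteed by the hypotheses that $V$ contains the constants and is a linear subspace) and that $\|M-h\|_V \le M$ sharply enough. Note we do not even need the full strength $\|\LL\|_V = 1$ in the lower direction — only $\|\LL\|_V \le 1$ is used — and the hypothesis $\LL(1)=1$ is what converts the norm estimate on $M-h$ into the sign information on $\LL(h)$. I expect no genuine difficulty here; the main thing is to present the reduction to nonnegativity cleanly and then the one-line norm estimate.
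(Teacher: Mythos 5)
Your proof is correct and is essentially identical to the paper's: the paper likewise reduces to nonnegativity and tests the norm on $\|f\|_V - f$, using $\LL(1)=1$ and $\|\LL\|_V \le 1$ to get $\|f\|_V - \LL f \le \|f\|_V$. Your side remark that only $\|\LL\|_V \le 1$ is needed is accurate but does not change the argument.
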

\begin{proof}
It suffices to show that if $f \ge 0$, $f \in V$ then $\LL f \ge 0$.
For such an $f$ let $h:= \|f\|_V-f$.
Then $\LL h = \|f\|_V \LL(1) - \LL f = \|f\|_V- \LL f$.
But $|\LL h| \le \|\LL\|_V\, \|h\|_V = \|h\|_V$, and $\|h\|_V \le \|f\|_V$
because both $f$ and $h$ are nonnegative. Hence
$ \|f\|_V- \LL f \le \|f\|_V$ and so $\LL f \ge 0$.
\end{proof}


Some properties are summarized below.
Note that when $f:S^N\to \R$ and $t \in \R$ then 
$\1_{f\le t}$ stands for the function on $S^N$ that is $1$
on the set $\{x \in S^N:\, f(x) \le t\}$ and $0$ on its complement.
\begin{lemma}
\label{propertiesofL}
\begin{itemize}
\item[(i)]
Extending functionals exist and any extending functional $\LL$
satisfies $\LL(1)=1$;
\item[(ii)]  
if $\|\LL\|=1$ then $\LL$  is monotone.
\item[(iii)]
if $\|\LL\|=1$ and $f$ a function such that 
$f(x)\le 1$ for all $x \in S^N$ then
\[
\LL\1_{f\le t} \le \frac{1-\LL f}{1-t}, \quad t < 1.
\]
\end{itemize}
\end{lemma}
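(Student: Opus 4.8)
The plan is to handle the three parts in sequence, with part (i) carrying the genuine content and parts (ii)--(iii) following formally from it together with Lemma \ref{Lsimple}. For part (i), the starting observation is that every element of the domain $U^N_n(b(S^n))$ of $\mathcal E$ is a symmetric function of its $N$ arguments, directly from the definition \eqref{UNn}. Since $\mathcal E$ is a bounded linear functional on this subspace of $b(S^N)$, I would first invoke the Hahn--Banach theorem to obtain a norm-preserving extension $\LL_0 : b(S^N)\to\R$, and then force symmetry by averaging over the symmetric group, setting $\LL(f) := \frac{1}{N!}\sum_{\sigma\in\S[N]} \LL_0(\sigma f)$. Because permuting coordinates preserves the sup norm, this averaging cannot increase the norm, so $\|\LL\|\le\|\LL_0\|=\|\mathcal E\|$; and because each $f$ in the domain of $\mathcal E$ is already symmetric (so $\sigma f = f$), the average collapses to $\LL_0(f)=\mathcal E(f)$, whence $\LL$ still extends $\mathcal E$ and $\|\LL\|\ge\|\mathcal E\|$. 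Equality of the norms makes $\LL$ an extending functional. For the normalization $\LL(1)=1$, I would note that the constant function $1$ on $S^N$ equals $U^N_n 1$ and so lies in the domain of $\mathcal E$, where $\mathcal E(1)=\E\,1(X_1,\ldots,X_n)=1$; since any extending functional agrees with $\mathcal E$ there, $\LL(1)=1$.

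Part (ii) is then immediate: having $\LL(1)=1$ from part (i) and $\|\LL\|=1$ by hypothesis, I would apply Lemma \ref{Lsimple} with $V=b(S^N)$ to conclude monotonicity. For part (iii) the idea is to produce a single pointwise inequality and push it through the monotone functional. Assuming $f\le 1$ everywhere, I claim $(1-t)\,\1_{f\le t}\le 1-f$ pointwise: on $\{f\le t\}$ one has $1-f\ge 1-t$, while on $\{f>t\}$ the left-hand side vanishes and $1-f\ge 0$ because $f\le 1$. Applying monotonicity from (ii), together with linearity and $\LL(1)=1$, yields $(1-t)\,\LL\1_{f\le t}\le 1-\LL f$, and dividing by $1-t>0$ gives the stated bound.

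The main obstacle is the symmetrization step in part (i): one must verify that averaging over $\S[N]$ simultaneously preserves the norm \emph{and} the agreement with $\mathcal E$, which hinges on the key observation that the entire domain of $\mathcal E$ consists of symmetric functions. A secondary point requiring care, in part (iii), is measurability, so that $\1_{f\le t}$ genuinely belongs to $b(S^N)$ and $\LL$ may be applied; I would assume throughout that $f$ is bounded and measurable. Once these are settled, parts (ii) and (iii) are purely formal consequences of monotonicity.
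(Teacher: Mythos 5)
Your proof is correct and follows essentially the same route as the paper: your group-averaging $\LL(f)=\frac{1}{N!}\sum_{\sigma\in\S[N]}\LL_0(\sigma f)$ is, by linearity, exactly the paper's composition $\widetilde{\mathcal E}\comp U^N_N$, your $\LL(1)=1$ argument via $\1_{S^N}=U^N_n\1_{S^n}$ matches theirs, and parts (ii)--(iii) use the same appeal to Lemma \ref{Lsimple} and the same pointwise inequality $f+(1-t)\1_{f\le t}\le \1_{S^N}$ (merely rearranged).
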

\begin{proof}
(i) Since the domain $U^N_n b(S^n)$ is a linear subspace of the Banach space
$b(S^N)$, it follows from the Hahn-Banach theorem \cite[Theorem 6.1.4]{DUDLEY02}
that there is $\widetilde{\mathcal E}: b(S^N) \to \R$ extending $\mathcal E$
and such that $\|\widetilde{\mathcal E}\|=\|\mathcal E\|$.
To produce a symmetric functional from this extension, set
\[
\mathcal L := \widetilde{\mathcal E} \comp U^N_N.
\]
Since $\mathcal L$ is an extension of $\mathcal E$ we  have
$\|\mathcal L \| \ge \|\mathcal E \|$. On the other hand,
$\|\mathcal L \| \le \|\widetilde{\mathcal E}\| \, \|U^N_N\|
= \|\widetilde{\mathcal E}\| = \|\mathcal E \|$.
Hence $\|\mathcal L \|=\|\mathcal E \|$.
Arguing as in the last step of the proof of Lemma \ref{necessary},
we have $\1_{S^N} = U^N_n \1_{S^n}$ and so, from the definition of 
$\mathcal E$, we have $\mathcal E(U^N_n \1_{S^n})= \E \1_{S^n}(X)=1$.
Since $\mathcal L$ extends $\mathcal E$, we indeed have $\LL(1)=1$. 
\\
(ii) Lemma \ref{Lsimple} applies.
\\
(iii) Suppose $f(x_1,\ldots,x_N) \le 1$ for all $(x_1,\ldots,x_N)
\in S^N$. Then
\[
f+(1-t) \1_{f \le t} \le \1_{S_N}
\]
and so
\[
\LL(f)+(1-t) \LL(\1_{f\le t}) = \LL(f+(1-t) \1_{f \le t}) \le 1.
\]
\end{proof}
The key to constructing an $N$-extension of $(X_1,\ldots,X_n)$
are the properties of the set function
\[
F_\LL(A) := \LL(\1_A), \quad A \in \SS^N,
\]
particularly on measurable rectangles. A measurable rectangle is a subset 
of $S^N$ of
the form $B_1\times \cdots \times B_N$, with
$B_1,\ldots, B_N \in \SS$.
The reason we insist on rectangles is because of symmetrization operations.
This will become clear in the proof of Theorem \ref{ke}, 
in Section \ref{critfe}.
For now we show the following.
\begin{lemma}
\label{CaraLike}
Let $X=(X_1,\ldots,X_n)$ be an exchangeable random element of $S^n$.
If $\LL$ is an extending functional such that $\|\LL\|=1$ and
$F_\LL$ is countably additive on the algebra generated
by measurable rectangles of $S^N$ then there exists a unique exchangeable
probability measure $Q$ on $S^N$ such that $Q(A)=F_\LL(A)$ for measurable
rectangles $A \subset S^n$ and such that $Q$ is an $N$-extension
of the law of $X$.
\end{lemma}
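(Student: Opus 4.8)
The plan is to show that $F_\LL$ is a probability pre-measure on the algebra $\AA$ generated by measurable rectangles, invoke Carathéodory's extension theorem to produce $Q$, and then verify exchangeability and the marginal condition using the symmetry of $\LL$ together with the fact that $\LL$ restricts to $\mathcal E$ on the range of $U^N_n$.

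First I would check that $F_\LL$ is a finitely additive, nonnegative, normalized set function on $\AA$. Finite additivity is immediate from linearity: if $A_1,\ldots,A_m$ are disjoint rectangles then $\1_{A_1\cup\cdots\cup A_m}=\sum_i \1_{A_i}$, so $F_\LL(\bigcup_i A_i)=\sum_i F_\LL(A_i)$, and in particular $F_\LL$ is well defined on $\AA$. Nonnegativity follows from Lemma \ref{propertiesofL}(ii): since $\|\LL\|=1$, $\LL$ is monotone, and $\1_A\ge 0$ gives $F_\LL(A)=\LL(\1_A)\ge \LL(0)=0$. Finally $F_\LL(\emptyset)=0$ and $F_\LL(S^N)=\LL(1)=1$ by Lemma \ref{propertiesofL}(i). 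By hypothesis $F_\LL$ is countably additive on $\AA$, so Carathéodory's extension theorem yields a unique probability measure $Q$ on $\sigma(\AA)=\SS^N$ with $Q=F_\LL$ on rectangles.

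Next, exchangeability of $Q$ and its uniqueness. Writing $T_\sigma(x)=(x_{\sigma(1)},\ldots,x_{\sigma(N)})$ for $\sigma\in\S[N]$, one has $\sigma\1_A=\1_{T_\sigma^{-1}A}$, and symmetry of $\LL$ gives $F_\LL(T_\sigma^{-1}A)=\LL(\sigma\1_A)=\LL(\1_A)=F_\LL(A)$. Since $T_\sigma^{-1}$ maps rectangles to rectangles, the pushforward $Q\comp T_\sigma^{-1}$ agrees with $Q$ on the generating $\pi$-system of rectangles, hence $Q\comp T_\sigma^{-1}=Q$; that is, $Q$ is exchangeable. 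The asserted uniqueness is the same $\pi$-$\lambda$ argument: any exchangeable probability measure agreeing with $F_\LL$ on rectangles is determined on a $\pi$-system generating $\SS^N$.

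The crux is to verify that $Q$ extends the law of $X$, i.e.\ that the $n$-marginal of $Q$ equals the law of $X$; I expect this to be the main obstacle, since $Q$ is pinned down only on rectangles by $F_\LL$, whereas the law of $X$ is encoded by $\mathcal E$ on the range of $U^N_n$, and the two descriptions must be bridged. Fix $B_1,\ldots,B_n\in\SS$, put $h(x)=\prod_{i=1}^n \1_{B_i}(x_i)$ on $S^N$ and $g=\1_{B_1\times\cdots\times B_n}$ on $S^n$. A counting argument --- each injection in $\S[n,N]$ is the restriction of exactly $(N-n)!$ permutations in $\S[N]$ --- yields the symmetrization identity
\[
U^N_N h = U^N_n g.
\]
Since $\LL$ is symmetric and $U^N_N h=\tfrac{1}{N!}\sum_{\sigma\in\S[N]}\sigma h$, linearity gives $\LL(h)=\LL(U^N_N h)=\LL(U^N_n g)$. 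But $U^N_n g$ lies in the domain of $\mathcal E$, where $\LL$ agrees with $\mathcal E$, so $\LL(U^N_n g)=\mathcal E(U^N_n g)=\E g(X)=\P(X_1\in B_1,\ldots,X_n\in B_n)$. On the other hand $\LL(h)=F_\LL(B_1\times\cdots\times B_n\times S^{N-n})=Q(B_1\times\cdots\times B_n\times S^{N-n})$, which is the $n$-marginal of $Q$ evaluated at $B_1\times\cdots\times B_n$. Hence the $n$-marginal of $Q$ and the law of $X$ coincide on all rectangles of $S^n$, thus on $\SS^n$, and by Lemma \ref{extcharaprop} this makes $Q$ an $N$-extension of the law of $X$.
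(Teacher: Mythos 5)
Your proof is correct and follows essentially the same route as the paper's: finite additivity and nonnegativity of $F_\LL$ via Lemma \ref{propertiesofL}, Carath\'eodory's extension theorem for existence, symmetry of $\LL$ on rectangle indicators for exchangeability (and a $\pi$-system argument for uniqueness), and the agreement of $\LL$ with $\mathcal E$ on $U^N_n(b(S^n))$ to identify the $n$-marginal. The only difference is cosmetic and occurs in the last step: the paper verifies the test-function identity \eqref{extchara} for $g=\1_A$ by expanding $U^N_n \1_A = \frac{1}{(N)_n}\sum_{\sigma\in\S[n,N]} \1_{\sigma^{-1}A}$ and using $Q=F_\LL$ on each rectangle $\sigma^{-1}A$, whereas you compute the marginal $Q(B_1\times\cdots\times B_n\times S^{N-n})$ directly via the (correct) identity $U^N_N h = U^N_n g$ and the symmetry of $\LL$ --- equivalent manipulations of the same three facts; note only that your closing appeal to Lemma \ref{extcharaprop} is superfluous, since exchangeability of $Q$ together with the marginal identity is already the definition of an $N$-extension.
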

\begin{remark}
\label{Lnotmeas}
Before proving this, we point out that even under the condition
that $\|\LL\|=1$ (which will turn out to be sufficient for extendibility),
the function $F_\LL(\cdot)$ need not be a measure. What the last lemma says
is that there is a probability measure $Q$ agreeing with $F_\LL$ on 
the algebra of rectangles but that $Q(A)$ is not necessarily equal to
$F_\LL(A)$ for arbitrary measurable $A \subset S^N$.
We refer to \textsection A.2 for an example.
\end{remark}

\begin{proof}[Proof of Lemma \ref{CaraLike}]
By linearity of $\LL$, $F_\LL$ is finitely additive.
By (ii) of Lemma \ref{propertiesofL}, $F_\LL(A) \ge 0$.
By the countable additivity assumption on the algebra of rectangles and 
Carath\'eodory's extension theorem, there is a probability measure
$Q$ on $S^N$ agreeing with $F_\LL$ on rectangles. Since
$\LL$ is a symmetric functional, we have that $F_\LL(\sigma^{-1} A)
= F_\LL(A)$, where $\sigma^{-1} A = \{(x_{\sigma(1)}, \ldots, x_{\sigma(N)}):\,
(x_1,\ldots,x_N) \in A\}$, and $\sigma$ a permutation of
$\{1,\ldots, N\}$.
Hence $Q$ is an exchangeable  probability measure on $S^N$.
We show that $Q$ is an extension of the law of $X=(X_1,\ldots,X_n)$
by showing that \eqref{extchara} holds.
It suffices to show that it holds for $g=\1_A$ with $A \subset S^n$
is a rectangle, i.e., $A=A_1\times \cdots \times A_n$,
with $A_i \in \SS$.
But then $\sigma^{-1} A \subset S^N$ is in the algebra  of
measurable rectangles of $S^N$.
Starting from the right-hand side of \eqref{extchara} we have
\begin{align*}
\int_{S^N} (U^N_n \1_A)\, dQ
&= \frac{1}{(N)_n} \sum_{\sigma \in \S[n,N]} 
\int_{S^N} \1_{\sigma^{-1}A}\, dQ
\\
&= \frac{1}{(N)_n} \sum_{\sigma \in \S[n,N]} Q(\sigma^{-1} A)
\\
&\stackrel{(a)}{=} \frac{1}{(N)_n} \sum_{\sigma \in \S[n,N]} \LL(\1_{\sigma^{-1} A})
\\
&= \LL\left( \frac{1}{(N)_n} \sum_{\sigma \in \S[n,N]} \1_{\sigma^{-1} A}\right)
\\
&= \LL(U^N_n \1_A)
\\
&\stackrel{(b)}{=} \mathcal E(U^N_n \1_A)
\\
&\stackrel{(c)}{=} \P(X \in A),
\end{align*}
where (a) is because $Q$ agrees with $F_\LL$ on the algebra of measurable
rectangles of $S^N$, (b) because $\LL$ agrees with $\mathcal E$ on the domain of
$\mathcal E$ and (c) by the definition \eqref{pef} of $\mathcal E$.
\end{proof}


\subsection{A criterion for finite extendibility.}
\label{critfe}
Up to now, we kept the space $S$ as general as possible.
We shall now need to introduce some topological
assumptions on $S$ and some assumptions on the given
probability measure.

We assume that $S$ is a locally compact Hausdorff space.
Then $\SS$ is the class of its Borel sets, the smallest
$\sigma$-algebra containing open sets.
A probability measure $P$ on $S$ is tight if for all $\epsilon > 0$
there is a compact set $K$ such that $P(K) \ge 1-\epsilon$.
A measure $P$ is outer regular\footnote{
For probability measures, 
outer regularity and tightness are equivalent to regularity 
(i.e., inner regularity and outer regularity). 
But we
use the former terminology throughout the paper, 
since we will use explicitly outer regularity
and tightness in the proofs. }
if for all $A \in \SS$,
$P(A) = \inf P(O)$ where the infimum is taken over all open sets $O \supset A$.
\begin{lemma}
\label{Lboundedcompacts}
Suppose that $S$ is a locally compact Hausdorff topological space
and $X=(X_1,\ldots,X_n)$ an exchangeable random element of $S^n$
such that the law of $X_1$ is tight. 
Fix $N > n$. 
If $\LL$ is an $N$-extending functional with norm $1$ then
the restriction of $\LL$ on the space $C_c(S^N)$ of continuous
functions with compact support has norm $1$ also.
\end{lemma}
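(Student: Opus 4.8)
The plan is to prove the only nontrivial inequality, namely that the norm of the restriction of $\LL$ to $C_c(S^N)$ is at least $1$; the reverse inequality is immediate because $C_c(S^N)\subset b(S^N)$ and $\|\LL\|=1$. Since $\LL(1)=1$ by Lemma \ref{propertiesofL}(i), it suffices to produce, for each $\epsilon>0$, a function $f\in C_c(S^N)$ with $0\le f\le 1$ (so $\|f\|\le 1$) and $\LL(f)\ge 1-N\epsilon$. Letting $\epsilon\downarrow 0$ then gives the claim.

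First I would use tightness of the law of $X_1$ to choose a compact set $K\subset S$ with $\P(X_1\notin K)<\epsilon$. Local compactness lets me enclose $K$ in a relatively compact open set, and Urysohn's lemma for locally compact Hausdorff spaces produces $\phi\in C_c(S)$ with $0\le\phi\le1$ and $\phi\equiv 1$ on $K$. I would then take the tensor product
\[
f(x_1,\ldots,x_N):=\prod_{i=1}^N\phi(x_i),
\]
which is continuous, supported in the compact set $(\supp\phi)^N$, and satisfies $0\le f\le 1$; hence $f\in C_c(S^N)$.

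The crux is to bound $\LL(1-f)$ from above. I would invoke the elementary telescoping inequality $1-\prod_{i=1}^N a_i\le\sum_{i=1}^N(1-a_i)$, valid for $a_i\in[0,1]$, to get the pointwise estimate $1-f\le\sum_{i=1}^N\bigl(1-\phi(x_i)\bigr)$. The right-hand side equals $N\,U^N_n h$, where $h\in b(S^n)$ is $h(y_1,\ldots,y_n):=1-\phi(y_1)$; indeed, counting the injections in $\S[n,N]$ that send $1$ to a fixed index gives $U^N_n h(x)=\frac1N\sum_{i=1}^N\bigl(1-\phi(x_i)\bigr)$. Since $\|\LL\|=1$, $\LL$ is monotone by Lemma \ref{propertiesofL}(ii), and since $U^N_n h$ lies in the domain of $\mathcal E$, on which $\LL$ agrees with $\mathcal E$, I obtain
\[
\LL(1-f)\le N\,\LL(U^N_n h)=N\,\mathcal E(U^N_n h)=N\,\E h(X_1,\ldots,X_n)=N\,\E\bigl(1-\phi(X_1)\bigr).
\]
Finally $1-\phi(X_1)\le\1_{\{X_1\notin K\}}$ because $\phi\equiv1$ on $K$ and $\phi\le1$, so $\E(1-\phi(X_1))\le\P(X_1\notin K)<\epsilon$, whence $\LL(f)=1-\LL(1-f)\ge 1-N\epsilon$.

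The step I expect to require the most care is the passage from the values of $\LL$ on the single-coordinate building blocks $1-\phi(x_i)$ to the one-dimensional expectation $\E\phi(X_1)$. This is exactly where the symmetrization operator $U^N_n$ and the defining agreement $\LL=\mathcal E$ on $U^N_n(b(S^n))$ must be used, since $\LL$ cannot be evaluated directly on the non-symmetric pieces; everything else (tightness, Urysohn, monotonicity, the product inequality) is soft.
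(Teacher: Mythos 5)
Your proof is correct, but it takes a recognizably different route through the key estimate, so it is worth comparing. The paper also starts from tightness and a compact $K$ with $\P(X_1\in K)\ge 1-\epsilon$, but it works with the indicator $g=\1_{K^n}$: it sets $t:=\sup_{x\notin K^N}U^N_n g(x)$, notes $t\le (N-n)/N<1$ by counting injections that avoid a bad coordinate, applies the level-set inequality of Lemma \ref{propertiesofL}(iii) to $f=U^N_n g$ to get $\LL(\1_{K^N})\ge 1-\tfrac{n\epsilon}{1-t}$, and only then invokes Urysohn and monotonicity to dominate $\1_{K^N}$ by some $F\in C_c(S^N)$ with $\LL(F)\ge\LL(\1_{K^N})$. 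You instead build the continuous function first, $f=\prod_{i=1}^N\phi(x_i)$ with $\phi\in C_c(S)$, $\phi\equiv1$ on $K$, and replace the level-set argument by the pointwise bound $1-f\le\sum_{i=1}^N(1-\phi(x_i))=N\,U^N_n h$ with $h(y_1,\ldots,y_n)=1-\phi(y_1)$; your injection count ($(N-1)_{n-1}$ injections send $1$ to a fixed index, and $(N)_n=N(N-1)_{n-1}$) is right, and then monotonicity plus $\LL=\mathcal E$ on $U^N_n b(S^n)$ gives $\LL(1-f)\le N\,\E(1-\phi(X_1))\le N\epsilon$. Both arguments use exactly the same ingredients (tightness, Urysohn, monotonicity from Lemma \ref{propertiesofL}(ii), and agreement with $\mathcal E$ on symmetrized functions), and even produce essentially the same constant, since $n\epsilon/(1-t)\le N\epsilon$ when $t\le (N-n)/N$. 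What your version buys is economy: it bypasses Lemma \ref{propertiesofL}(iii) and the intermediate indicator $\1_{K^N}$ entirely, evaluating $\LL$ directly at a member of $C_c(S^N)$. What the paper's version buys is that the level-set device of part (iii) is rehearsed here and then reused in the proof of Theorem \ref{ke}, where the test sets are measurable rectangles and no continuous surrogate with your product structure is available, so that lemma is needed there regardless.
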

\begin{proof}
Let $0 < \epsilon < 1/n$. By tightness, there is a compact set $K \subset S$
such that $\P(X_1 \in K) \ge 1-\epsilon$. Hence
$\P((X_1,\ldots,X_n) \in K^n) \ge 1-n\epsilon$. 
Let
\[
g(x_1,\ldots,x_n) := \1_{K^n}(x_1,\ldots,x_n).
\]
It is easy to see that 
\[
t:= \sup_{x \not \in K^n} U^N_n g(x) <1.
\]
Apply (iii) of Lemma \ref{propertiesofL} to $f=U^N_n g$:
\[
\LL(\1_{U^N_n g \le t}) \le \frac{1-\LL(U^N_n g)}{1-t}.
\]
But
\[
\L(\1_{U^N_n g \le t}) = \LL(\1_{S^N\setminus K^N})
=1-\LL(\1_{K^N}),
\]
while
\[
\LL(U^N_n g) = \mathcal E(U^N_n g) = \E g(X) = \P(X \in K^n) \ge 1-n\epsilon
\]
and so
\[
\L(\1_{K^N}) \ge 1-\frac{n\epsilon}{1-t}.
\]
By Urysohn's lemma \cite[p.\ 39]{rudin1987real},
there exists a function $F \in C_c(S^N)$ such that $0\le F \le 1$ everywhere
and $F=1$ on $K^N$. Hence $\1_{K^N} \le F$. By the monotonicity of $\LL$,
\[
\LL(F) \ge \LL(\1_{K^N}) \ge  1-\frac{n\epsilon}{1-t}.
\]
Since $\epsilon$ is an arbitrary positive number this says that $\LL(F) \ge 1$
implying that the norm of $\LL$ restricted to $C_c(S^N)$ is at least $1$.
On the other hand, the norm of the restriction cannot be larger than 
the norm of $\LL$ which is $1$. Hence the norm of the restriction is
equal to $1$.
\end{proof}

We now pass on to the proof of the main theorem. 

\proof[Proof of Theorem \ref{ke}]
By Lemma \ref{Lboundedcompacts}, the norm of the restriction of $\LL$ on
$C_c(S^N)$ is 1. By the Riesz representation theorem 
\cite[p.\ 40]{rudin1987real} there is a measure $Q$ on $(S^N, \SS^N)$
such that
\[
\L f = \int_{S^N} f\, dQ, \quad f \in C_c(S^N).
\]
By (i) and (ii) Lemma \ref{propertiesofL}, we have that $Q$ is 
a probability measure. Moreover, the Riesz representation theorem
guarantees that
\begin{align}
Q(G) 
&= \sup \{ \LL f:\, 
f \in C_c(S^N),\, 0 \le f \le 1, \, \operatorname{supp}(f) \subset G\},
\qquad &\text{ open } G \subset S^N,
\label{Ropen}
\end{align}
where $\operatorname{supp}(f)=\{x: f(x)\neq0\}$.
We shall prove that $Q$ provides the announced $N$-extension.
Fix a measurable rectangle
\[
R = A_1\times\cdots\times A_N,
\]
and $0< \epsilon < 1/N^2$. 
By the outer regularity of the law of $X_1$, pick open sets $A_{j,\epsilon}
\subset S$ such that
\begin{equation}
\label{uuu}
A_j \subset A_{j,\epsilon}, \quad
\P(X_1 \in A_j) \le 
\P(X_1 \in A_{j,\epsilon}) \le 
\P(X_1 \in A_j) +\epsilon, \quad 1 \le j \le N.
\end{equation} 
Then
\[
R \subset 
A_{1,\epsilon} \times \cdots \times A_{N,\epsilon} =: O_\epsilon
\]
and so
\begin{equation}
\label{OG}
G_\epsilon:= O_\epsilon \setminus R =
\{x \in S^N:\, x_j \in A_{j,\epsilon}\setminus A_j \text{ for some } 1\le j \le N\}
\end{equation}
Consider also the subset of $S^n$ defined by
\[
F_\epsilon := \{(x_1,\ldots,x_n) \in S^n:\,
x_i \in A_{j,\epsilon}\setminus A_j \text{ for some } 1\le i \le n, 1\le j \le N\}
\]
and, with $F_\epsilon^c = S^n \setminus F_\epsilon$, let
\[
f(x_1,\ldots,x_N) := (U^N_n \1_{F_\epsilon^c})(x_1,\ldots,x_N)
= \frac{\#\{\sigma \in \S[n,N]:\,
x_{\sigma(i)} \not\in A_{j,\,\epsilon}\setminus A_j,\,\forall i\le n, j \le N\}}
{(N)_n}
\]
If $x \in G_\epsilon$ then $x_j \in A_{j,\epsilon}\setminus A_j$
for some $j \le N$.
Then the number of injections $\sigma:\{1,\ldots,n\} \to \{1,\ldots,N\}$
such that $x_{\sigma(i)} \not\in A_{j,\epsilon}\setminus A_j$
for all $i=1,\ldots,n$ is at most the number of injections from
$\{1,\ldots,n\}$ into $\{1,\ldots,N\}\setminus\{j\}$, that is,
at most $(N-1)_n$. Therefore,
\[
G_\epsilon\subset \{x \in S^N:\, f(x) \le (N-n)/N\}.
\]
By the monotonicity of $\LL$,
\[
\LL(\1_{G_\epsilon}) \le \LL(\1_{f \le (N-n)/N}).
\]
We now apply (iii) of Lemma \ref{propertiesofL} to $f$, with $t=(N-n)/N$:
\[
\LL(\1_{f \le (N-n)/N}) \le \frac{1-\LL(f)}{1-(N-n)/N}.
\]
But
\begin{multline*}
\LL(f) = \LL(U^N_n \1_{F_\epsilon^c})
= \mathcal E(U^N_n \1_{F_\epsilon^c})
= \E \1_{F_\epsilon^c}(X_1,\ldots,X_n)
= \P((X_1,\ldots,X_n) \not \in F_\epsilon)
\\
\ge 1 - \sum_{i=1}^n \sum_{j=1}^N \P(X_i \in A_{j,\epsilon}\setminus A_j)
\ge 1-nN\epsilon,
\end{multline*}
where the last inequality is due to \eqref{uuu}.
Combining the above, we have
\[
\LL(\1_{G_\epsilon}) \le N^2\epsilon.
\]
Using \eqref{OG} and the monotonicity of $\LL$, we have
\[
\LL(\1_{O_\epsilon}) - \LL(\1_R)
= \LL(\1_{O_\epsilon\setminus R}) = \LL(\1_{G_\epsilon}) \le N^2\epsilon.
\]
On the other hand, by \eqref{Ropen} and the monotonicity of $\LL$,
\[
Q(O_\epsilon) \le \LL(\1_{O_\epsilon}).
\]
From the last two displays we have
\[
Q(R) \le Q(O_{\epsilon})\le \LL(\1_R) + N^2 \epsilon.
\]
Letting $\epsilon \downarrow 0$ we obtain
\[
Q(R) \le \LL(\1_R).
\]
This is true for all measurable rectangles $R$ and, by additivity,
true for all sets in the algebra of rectangles. Hence true for
$S^N\setminus R$. This implies that we actually have equality:
\[
\LL(\1_A) = Q(A),
\]
for all $A$  in the algebra of measurable rectangles of $S^N$.
Since $Q$ is a measure, the assumptions of Lemma \ref{CaraLike} hold
and so $Q$ is an $N$-extension of the law of $(X_1,\ldots, X_n)$.
\qed

\begin{corollary}
Given an $n$-exchangeable probability measure $P_n$ on $S^n$
and $N > n$, such that  one-dimensional marginal of $P_n$ 
is tight and outer regular and $S$ a locally
compact Hausdorff space,
we can formulate the criterion for $N$-extendibility as follows:
\begin{align}
\label{ccrit}
&\forall \epsilon > 0
~ \forall g \in \Phi_n
~ \exists a_1, \ldots, a_N \in S\text{ such that }&\nonumber\\
&\left|\int_{S^n} g(x)\, P_n(dx)\right|
\le
\frac{1+\epsilon}{(N)_n}
\bigg| \sum g(a_{\sigma(1)}, \ldots, a_{\sigma(n)})\bigg|,&
\end{align}
where the sum is taken over all injections $\sigma:  [n]\to [N].$
\end{corollary}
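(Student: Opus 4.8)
The plan is to read the criterion \eqref{ccrit} off Theorem \ref{ke} rather than to reconstruct an $N$-extension from scratch. Theorem \ref{ke} already characterizes $N$-extendibility by the single equation $\|\mathcal E^N_n\|=1$, and the lemma following the definition of $\mathcal E^N_n$ shows that $\|\mathcal E^N_n\|\ge 1$ holds unconditionally (test $g\equiv 1$ in \eqref{Enorm}). Hence $\|\mathcal E^N_n\|=1$ is equivalent to the one-sided bound $\|\mathcal E^N_n\|\le 1$. So I would reduce the whole statement to proving that $\|\mathcal E^N_n\|\le 1$ is equivalent to \eqref{ccrit}, and then quote Theorem \ref{ke} (whose hypotheses on $S$ and on the law of $X_1$ are exactly those assumed in the corollary) to conclude.

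First I would unwind both sides of $\|\mathcal E^N_n\|\le 1$. By \eqref{Enorm} this inequality says precisely that $|\E g(X_1,\ldots,X_n)|\le \|U^N_n g\|$ for every $g\in\Phi_n$. The left-hand side is $\big|\int_{S^n} g\, dP_n\big|$ since $P_n$ is the law of $X$, while the definition \eqref{UNn} of $U^N_n$ gives, directly from $\|U^N_n g\|=\sup_x|U^N_n g(x)|$,
\[
\|U^N_n g\| = \sup_{(a_1,\ldots,a_N)\in S^N}\frac{1}{(N)_n}\Big|\sum_{\sigma\in\S[n,N]} g(a_{\sigma(1)},\ldots,a_{\sigma(n)})\Big|.
\]
This already exhibits the exact quantities appearing in \eqref{ccrit}; the only gap is the passage between a supremum and an ``$\exists\,a_1,\ldots,a_N$'' statement, and the multiplicative factor $1+\epsilon$ is precisely what bridges it.

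For the equivalence itself I would argue both directions. Assume $\|\mathcal E^N_n\|\le 1$, fix $g\in\Phi_n$ and $\epsilon>0$; if $\|U^N_n g\|>0$, then since $\|U^N_n g\|/(1+\epsilon)<\|U^N_n g\|$ is strictly below the supremum, there is $a\in S^N$ with $\frac{1}{(N)_n}\big|\sum_\sigma g(a_{\sigma(1)},\ldots,a_{\sigma(n)})\big|\ge \|U^N_n g\|/(1+\epsilon)$, and combining with $\big|\int g\, dP_n\big|\le\|U^N_n g\|$ yields \eqref{ccrit}. Conversely, \eqref{ccrit} gives $\big|\int g\, dP_n\big|\le (1+\epsilon)\|U^N_n g\|$ for every $\epsilon>0$, and letting $\epsilon\downarrow 0$ returns $\big|\int g\, dP_n\big|\le\|U^N_n g\|$, i.e.\ $\|\mathcal E^N_n\|\le 1$.

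The only point needing care is the degenerate case $\|U^N_n g\|=0$, where one cannot divide by the supremum. Here Lemma \ref{pcrucial} (equivalently, the vanishing of $U^N_n g(a)$ for every $a$) forces $\int g\, dP_n=\E g(X)=0$, so \eqref{ccrit} reads $0\le 0$ and holds for any choice of $a_1,\ldots,a_N$, which is consistent in both directions. I do not expect a genuine obstacle: the content is entirely a restatement of Theorem \ref{ke} through the explicit formula for $\|U^N_n g\|$, so the main care is the bookkeeping around the supremum and this degenerate case.
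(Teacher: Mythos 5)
Your proposal is correct and matches the paper's own (one-sentence) justification: the paper likewise observes that \eqref{ccrit} says exactly $\|\mathcal E^N_n\|\le 1$, which together with the unconditional bound $\|\mathcal E^N_n\|\ge 1$ restates the condition $\|\mathcal E^N_n\|=1$ of Theorem \ref{ke}. Your unwinding of the supremum via the $1+\epsilon$ factor and the treatment of the degenerate case $\|U^N_n g\|=0$ through Lemma \ref{pcrucial} simply fill in details the paper leaves implicit.
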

Indeed, this says that $\|\mathcal E^N_n\| \le 1$ and, since
$\|\mathcal E^N_n\| \ge 1$ always, it expresses precisely the condition
of Theorem \ref{ke}.

\begin{remark}
\label{proofnew}
In the proof of Theorem \ref{ke} we actually showed that,
under the stated conditions, for any $N$-extending functional $\LL:S^N\to \R$
there is a probability measure $Q$ on $S^N$ such that
$\LL \1_A = Q(A)$ for $A$ in the algebra of measurable rectangles of $S^N$.
\end{remark}



\section{From finite to infinite extendibility}
\label{infext}
We are seeking conditions that enable us to
extend an $n$-exchangeable probability measure to
an exchangeable probability measure on $S^\N$ in the standard sense. 
It seems natural to posit that $X$ is $N$-extendible for
all $N \ge n$ if and only if $X$ is extendible to $S^\N$. 
One direction is clear: If $X$ is an exchangeable random element of
$S^\N$ then $(X_1,\ldots, X_n)$ is $N$-exchangeable for all $N \in \N$.
But the other direction is not {\em a priori} obvious
since we may have an $N'$-extension and an $N''$-extension of $X$,
for some $n < N' < N''$, but the $N''$-extension may not
be an extension of the $N'$-extension.  Even worse, the $N'$-extension 
might not be further extendible.

One may attempt to use Prohorov's theorem 
to prove the infinite extendibility by
obtaining an appropriate infinite exchangeable sequence. 
This is possible in a metric space. But we work with
a locally compact Hausdorff space (not necessarily metrizable).
For a locally compact Hausdorff space
there is a version of Prohorov's theorem
\cite{enc} 
whose conclusion is stated in terms of continuous functions with
compact support.
This class of functions is not big enough for our purposes.
Indeed, as in Lemma \ref{extcharaprop} 
the set of test functions required for $N$-extendibility
is $U_n^N g$, where $g$ ranges over bounded measurable functions on $S^n$,
and these functions
are merely bounded.
 To bypass this difficulty, we will rely on a functional
analytic approach (and use the Hahn-Banach theorem again)
in the theorem below.

\begin{theorem}\label{ine}
Let $n$ be a positive integer, $n \ge 2$.
Assume that the hypotheses in Theorem \ref{ke} hold.
The following are equivalent:
\\
(a) $X$ is $N$-extendible for all $N \ge n$.
\\
(b) There is a random element $Y=(Y_1,Y_2,\ldots)$ of $S^\N$
with exchangeable law such that $(X_1,\ldots,X_n) \eqdist (Y_1, \ldots, Y_n)$.
\end{theorem}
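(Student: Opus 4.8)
The implication (b) $\Rightarrow$ (a) is immediate and requires no hypotheses: if $Y=(Y_1,Y_2,\ldots)$ is exchangeable with $(X_1,\ldots,X_n)\eqdist(Y_1,\ldots,Y_n)$, then for every $N\ge n$ the truncation $(Y_1,\ldots,Y_N)$ is $N$-exchangeable and its first $n$ coordinates have the law of $X$, so $X$ is $N$-extendible. The content is the converse, and the plan is to manufacture a single family of mutually consistent exchangeable extensions $\{Q_N\}_{N\ge n}$, with $Q_N(A)=Q_M(A\times S^{M-N})$ for $n\le N\le M$ and $A\in\SS^N$, and then assemble them into a measure on $S^\N$. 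The difficulty flagged before the statement---that an $N'$-extension need not be further extendible, and that extensions chosen independently at different levels need not be compatible---is circumvented by producing \emph{all} levels at once from one Hahn--Banach extension carried out on an inductive limit.

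First I would set up the inductive limit. Let $V_N\subset b(S^N)$ be the subspace of symmetric functions. By Lemma \ref{pp}(ii) the operator $U^M_N$ maps $V_N$ into $V_M$ and satisfies $U^{M'}_M\comp U^M_N=U^{M'}_N$, while by Lemma \ref{pp}(i) it does not increase the sup-norm; these form a directed system. Let $V_\infty$ be its inductive limit with canonical maps $j_N\colon V_N\to V_\infty$, equipped with the seminorm $p(j_N f)=\lim_{M\to\infty}\|U^M_N f\|$, the limit being monotone by Lemma \ref{pp}(iii). The functionals $\mathcal E^N_n$ glue along this system: since $U^M_n=U^M_N\comp U^N_n$, one has $\mathcal E^M_n(U^M_N\,U^N_n g)=\E g(X)=\mathcal E^N_n(U^N_n g)$, so they define one functional $e$ on the subspace $W\subset V_\infty$ spanned by the classes of the ranges $U^N_n b(S^n)$. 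Because each $\|\mathcal E^M_n\|=1$ (by hypothesis (a) and Theorem \ref{ke}), for $f=U^N_n g$ we get $|\mathcal E^N_n(f)|=|\mathcal E^M_n(U^M_N f)|\le\|U^M_N f\|$ for every $M\ge N$, hence $|e|\le p$ on $W$. By the Hahn--Banach theorem \cite[Theorem 6.1.4]{DUDLEY02}, dominated by the seminorm $p$, extend $e$ to $\Lambda_\infty$ on $V_\infty$ with $|\Lambda_\infty|\le p$; since the class of $1$ has $p$-value $1$ and $e(1)=1$, this extension is unital of norm $1$.

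Pulling $\Lambda_\infty$ back, the functionals $\Lambda_N:=\Lambda_\infty\comp j_N$ on $V_N$ are \emph{automatically consistent}, $\Lambda_M\comp U^M_N=\Lambda_N$, each unital of norm $1$. Setting $\LL_N:=\Lambda_N\comp U^N_N$ on all of $b(S^N)$ yields symmetric, norm-$1$ functionals extending $\mathcal E^N_n$, i.e. $N$-extending functionals, which remain consistent: using $U^M_N=U^M_N\comp U^N_N$ (Lemma \ref{pp}(ii)) one checks $\LL_M\comp U^M_N=\LL_N$ on $b(S^N)$. Under the standing hypotheses, Theorem \ref{ke} together with Remark \ref{proofnew} (equivalently Lemma \ref{CaraLike}) produces for each $N$ an exchangeable probability measure $Q_N$ on $S^N$ with $Q_N(A)=\LL_N(\1_A)$ on the algebra of measurable rectangles, and $Q_N$ is an $N$-extension of the law of $X$. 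Consistency of the measures then follows from that of the functionals: for a rectangle $A\subset S^N$ the symmetry of $\LL_M$ and the identity $U^M_M\1_{A\times S^{M-N}}=U^M_N\1_A$ give $Q_M(A\times S^{M-N})=\LL_M(\1_{A\times S^{M-N}})=\LL_M(U^M_N\1_A)=\LL_N(\1_A)=Q_N(A)$, which extends to all measurable $A$ by uniqueness of the Carath\'eodory extension.

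It remains to build $Q$ on $(S^\N,\SS^\N)$ with marginals $Q_N$; then $Q$ is exchangeable, since every cylinder event lies in some $S^N$ on which $Q_N$ is exchangeable, and its first $n$ coordinates have the law of $X$, giving (b). This final assembly is the main obstacle, precisely because $S^\N$ is not locally compact and the locally compact form of Prohorov's theorem controls only $C_c$-functions. The way out is that each $Q_N$, arising from the Riesz representation theorem, is a Radon probability measure, hence inner regular with respect to compact sets; a consistent sequence of such measures admits a projective limit on $S^\N$ without any metrizability assumption. This is where quotient spaces enter the authors' treatment: one pushes the whole consistent system forward to a metrizable quotient of $S$ adapted to the tight laws, constructs the exchangeable limit there where the classical extension theory applies, and lifts it back. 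Either route succeeds precisely because the coherence of the extensions has already been secured by the single Hahn--Banach step, so I expect the measure-theoretic passage in the non-metrizable setting---rather than the functional-analytic construction---to be where the real care is needed.
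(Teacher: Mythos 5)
Your proposal is correct and is essentially the paper's own proof: your inductive limit $V_\infty$ with seminorm $p(j_N f)=\lim_{M\to\infty}\|U^M_N f\|$ and the single Hahn--Banach extension is exactly the paper's quotient space $\Phieq$ normed by $\|\bm[g\bm]\|=\lim_{N}\|U^N_{i_g}g\|$ with the functional $\L^0$ extended to $\L^*$, after which both arguments restrict to the consistent $N$-extending functionals $\L_n^N$, invoke Remark \ref{proofnew} to obtain consistent exchangeable measures $Q_n^N$, and conclude by the compact-class (Neveu) form of Kolmogorov's extension theorem --- i.e., the first of your two suggested routes for the final step, which works precisely because the $Q_n^N$ produced via the Riesz representation are inner regular. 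The only divergences are cosmetic: the ``quotient spaces'' the authors use are this function-space quotient $\Phieq$, not a metrizable quotient of $S$, and where you dominate by the seminorm $p$, the paper additionally verifies via Lemma \ref{claiminproof} (using the representation formula \eqref{fre}) that $p$ induces a genuine norm, a step your seminorm formulation of Hahn--Banach legitimately sidesteps.
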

\begin{proof}
Only (a) $\Rightarrow$ (b) needs to be shown. For $k \in \N$,
let $\Phi_k=b(S^k)$ be the set of bounded measurable functions on $S^k$
equipped with the sup norm.
Let $\Phi^*$ be the set of all bounded measurable real-valued
functions $f(x_1,\ldots,x_N)$ on $S^N$ for some $N \ge n$:
\[
\Phi^* := \bigcup_{N \ge n} \Phi_N.
\]
If $k < \ell$ then $\Phi_k$ is naturally embedded into $\Phi_\ell$:
if $f \in \Phi_k$ then we can define $\widetilde f \in \Phi_\ell$
by $\widetilde f(x_1, \ldots, x_k, \ldots, x_\ell) := f(x_1, \ldots, x_k)$. 
We shall write $\Phi_k \subset \Phi_\ell$ for this embedding; this should 
be read in the sense that the image of $\Phi_k$ under $f \mapsto \widetilde f$
is contained in $\Phi_\ell$.
If $f \in \Phi^*$ then there is a $k\ge n$ such that $f \in \Phi_k$. 
The $N$-symmetrized version of $f$ is $U^N_k f$ as in \eqref{UNn}.
Since $\Phi_k \subset \Phi_\ell$ for $k\le \ell$, 
we can also consider $U^N_\ell f$ for $k\le \ell  \le N$.
We can easily see $U^N_\ell f = U^N_k f$.

We let $i_f$ be the minimum $N$ such that $f \in \Phi_N$.
We next define a relation $\sim$ on $\Phi^*$ by
\[
f \sim g \iff \exists N ~ U^N_{i_f} f = U^N_{i_g} g, \quad f, g \in \Phi^*, N\geq \max\{i_f,i_g\}.
\]
We see that $\sim$ is an equivalence relation. To check transitivity,
suppose $f \sim g$ and $g \sim h$. Then $U^N_{i_f} f = U^N_{i_g} g$
and $U^M_{i_g} g = U^M_{i_h} h$ for some $M$ and $N$.
Letting $L:= \max(M,N)$ and using the property (ii) of Lemma \ref{pp}
we have $U^L_{i_f} f = U^L_{i_g} g$ and $U^L_{i_g} g = U^L_{i_h} h$,
implying that $f \sim h$.
In particular, notice that  any $f \in \Phi^*$ is equivalent to
some symmetric function because
\begin{equation}
\label{ets}
f \sim U^N_k f, \text{ for all $N \ge k \ge i_f$}.
\end{equation}
From the discussion above and property (ii) of Lemma \ref{pp} 
we also see that
\begin{equation}
\label{simeq}
f \sim g \iff \exists k_0~ \text{ so that if } 
k_0 \le k \le N \text{ then } U^N_k f = U^N_k g.
\end{equation}
Let $\bm[f\bm]$ be the equivalence class of $f$:
\[
\bm[f\bm] := \{g \in \Phi^*:~ g \sim f\},
\]
and  let $\Phieq$ be the collection of equivalence classes:
\[
\Phieq:= \{\bm[ f\bm]:\, f \in \Phi^*\}.
\]
We can easily check using \eqref{simeq}
that if $f \sim f'$ and $g \sim g'$ then, for all $\alpha,\beta \in \R$,
$\alpha f + \beta g \sim \alpha f' + \beta g'$.
Hence we can define
\[
\alpha \bm[f\bm] + \beta \bm[g\bm] := \bm[ \alpha f + \beta g\bm],
\]
which means that $\Phieq$ is a linear space with origin $\bm[0\bm]$,
the set of functions equivalent to the identically zero function.

By Lemma \ref{pp}(iii), the norm $\|U^N_{i_g} g\|$ decreases as
$N$ increases, so we attempt to define a norm on $\Phieq$ by
\[
\|\bm[ g\bm]\| := \lim_{N \to \infty} \|U^N_{i_g}g\|
= \inf_{N\geq i_g} \|U^N_{i_g}g\|.
\]
First, it is clear that if $g \sim h$ then $\|\bm[ g\bm]\| 
= \|\bm[ h\bm]\|$; so $\bm[ g\bm] \mapsto \|\bm[ g\bm]\|$
is a well-defined function.
To see that the triangle inequality holds we use \eqref{ets} and \eqref{simeq}.
Let $g_1, g_2 \in \Phi^*$. Then we can choose $k$ so that for
$g_1 \sim U^{N}_k g_1$ and $g_2 \sim U^{N}_k g_2$,
for all large $N$.
Then $g_1+g_2 \sim U^{N}_k g_1+U^{N}_k g_2 
= U^N_k (g_1+g_2)$ and so
\begin{align*}
\|\bm[ g_1+g_2\bm]\| &= \inf_{N\geq k} \|U^N_k (g_1+g_2)\|
\\
&\le \inf_{N\geq k} (\|U^N_k g_1 \| + \|U^N_k g_1 \|)
\\
&= \lim_{N \to \infty} (\|U^N_k g_1 \| + \|U^N_k g_1 \|)
= \|\bm[ g_1 \bm]\| + \|\bm[ g_2 \bm]\|.
\end{align*}
To check positive definiteness we prove the following:
\begin{lemma}
\label{claiminproof}
If $g \in \Phi^*$ has $\|\bm[g\bm]\|=0$ 
and if $f\in \Phi_N$ is a symmetric function
such that $f \sim g$ then 
$f$ is identically zero.
\end{lemma}
\begin{proof}
Let $f\in \Phi_N$ be a symmetric function and $\pi$ a probability measure on 
$(S,\SS)$.
Then 
\[
\pi^N(f) := \int_{S^N} f\, d\pi^N 
=\int_{S^{N+M}} (U_N^{N+M}f)\, d \pi^{N+M}.
\]
So then
$\|\pi^N(f)\|\leq \|U_N^{N+M}f\|$. Note that 
$\lim_{M\to\infty} \|U_N^{N+M}f\| =
\|\bm[f\bm]\|$.
Let $g \in \Phi^*$ have $\|\bm[g\bm]\|=0$ and assume $f \sim g$. 
Then $\|\bm[f\bm]\|=\|\bm[g\bm]\|=0$.
Hence 
\[
\pi^N(f)=0, \quad\text{ for any probability measure $\pi$}. 
\]
By (\ref{fre}), 
\[
\E[f(Y)]=0, \quad\text{ for any $N-$exchangeable
$Y=(Y_1,\ldots,Y_N)$}.
\]
Together with the symmetry of $f$, this implies that $f$ is identically $0$.
\end{proof}

Suppose now that $\|\bm[g\bm]\|=0$. Then $g \sim U^N_k g$ for some $k$
and $N$. By Lemma \ref{claiminproof}, $U^N_k g$ is identically zero.
Thus $\bm[g\bm]=\bm[0\bm]$.
We have thus shown that $\Phieq$ is a normed linear space.
Consider now
\[
\bm[\Phi_n\bm] := \{\bm[f\bm]:\, f \in \Phi_n\}.
\]
Clearly, $\bm[\Phi_n\bm]$ is a linear subspace of $\Phieq$.
It is normed by the same norm.
We now attempt to define a linear functional
\[
\L^0 : \bm[\Phi_n\bm]  \to \R
\]
based on the following observation.
If $f, g \in \Phi_n$ have $\E f(X_1, \ldots, X_n)\neq \E g(X_1,\ldots, X_n)$
Then, by Lemma \ref{pcrucial}, $U^N_n f \neq U^N_n g$
for all $N \ge n$.
So then $f \not \sim g$ and so $\bm[f\bm] \not = \bm[ g\bm]$.
Therefore,
\begin{equation}
\label{L0}
\L^0: \bm[g\bm] \mapsto \E g(X_1, \ldots, X_n)
\end{equation}
is a function; in fact, a linear function from $\bm[\Phi_n\bm]$ 
into $\R$.
Consider the norm of $\L^0$:
\[
\|\L^0\| = \sup_{g \in \Phi_n}
\frac{|\L^0(\bm[g\bm])|}{\|\bm[g\bm]\|}
= \sup_{g \in \Phi_n} \sup_{N \ge n}
\frac{|\E g(X_1, \ldots, X_n)|}{\|U^N_n g\|}
= \sup_{N \ge n} \|\mathcal E^N_n\|,
\]
where the last equality is due to \eqref{Enorm}.
Since, by assumption, $(X_1,\ldots,X_n)$ is $N$-exchangeable
for all $N \ge n$, we have (Lemma \ref{necessary}) that $\|\mathcal E^N_n\|=1$
for all $N \ge n$ and so 
\[
\|\L^0\|=1.
\]
The Hahn-Banach theorem guarantees that there is a linear functional
\[
\LL^* : \bm[\Phi^*\bm] \to \R
\]
such that $\LL^* = \LL^0$ on $[\Phi_n]$ and 
\[
\|\L^*\| = \|\L^0\|=1.
\]
We then define 
\begin{equation}
\label{Ldec}
\L: \Phi^* \to \R; \quad \L g:= \L^*(\bm[g\bm]).
\end{equation}
Note that $\L$ is a linear functional which is moreover symmetric,
that is, $\L g = \L g'$
if $g'$ is obtained from $g$ by permuting its arguments.
Since $\|\L^*\|=1$, we have, for all $g \in \Phi^*$,
\[
|\L g| = |\L^*(\bm[g\bm])| \le \|\bm[g\bm]\| 
= \inf_N \|U^N_{i_g} g \| \le \|g\|,
\]
and so
\[
\|\L\|=1.
\]
In particular, for $N \ge n$, let
\begin{equation}
\label{Lrest}
\L_n^N := \L\big|_{\Phi_N},
\end{equation}
the restriction of $\L$ onto $\Phi_N$.
Then
\begin{equation}
\label{stillone}
\|\L_n^N\|=1.
\end{equation}
We now claim that $\L_n^N$ is an $N$-extending functional, that is,
\begin{equation}
\label{Lsame}
\L_n^N = \mathcal E^N_n \quad\text{on } U^N_n \Phi_n.
\end{equation}
To see this, let $f = U^N_n g$ for
some $g \in \Phi_n$. 
Then $\mathcal E^N_n f = \E g(X_1,\ldots,X_n)$.
On the other hand,
\[
\L_n^N f \stackrel{\text{\eqref{Lrest}}}{=} \L(U^N_n g) 
\stackrel{\text{\eqref{Ldec}}}{=} \L^*(\bm[ U^N_n g\bm])
= \L^*(\bm[ g\bm]) 
= \L^0(\bm[ g\bm]) 
\stackrel{\text{\eqref{L0}}}{=} \E g(X_1,\ldots,X_n).
\]
Note that the symmetry of $\L_n^N$ is inherited from $\L$. 
Therefore $\L_n^N$
is an $N$-extending functional of $(X_1,\ldots,X_n)$. 

Since $\L_n^N$ was constructed via the operator $\L$,
we have the consistency property:
\begin{equation}
\label{cons}
\LL_n^N(\1_A) = \LL_n^{N'}(\1_{A \times S^{N'-N}}), \quad n \le N \le N',
\end{equation}
for all $A$ in the algebra of measurable rectangles of $S^N$.
As in the proof of Theorem \ref{ke} (see Remark \ref{proofnew}), 
we have that there is a probability measure, say $Q_n^N$, on $S^N$,
such that
\[
Q_n^N(A) = \LL_n^N(\1_A),
\]
for all $A$ in the algebra of measurable rectangles of $S^N$.
So \eqref{cons} implies that
\[
Q_n^N(A) = Q_n^{N'}(A \times S^{N'-N}), \quad n \le N \le N',
\]
for all $A$ in the algebra of measurable rectangles of $S^N$.
Moreover, 
\[
\P(X_1,\ldots,X_n \in A) = Q_n^N(A \times S^{N-n}),
\]
for all $A \in \SS^n$ and all $N \ge n$.
By Kolmogorov's extension theorem \cite[p.\ 82]{NevFei65},
there exists a probability measure $Q$ on $(S^{\N}, \SS^{\N})$
such that $Q(A \times S^\infty)= Q_n^N(A)$ if $A \in \SS^N$,
for all $N \ge n$.
By the $N$-exchangeability of $Q_n^N$, for all $N$, we have that
$Q$ is an  exchangeable probability measure
on $(S^\N, \SS^\N)$.  
Let $Y=(Y_1,Y_2,\ldots)$ be a random element of $S^\N$
with law $Q$. Then
$(X_1,\ldots,X_n) \eqdist (Y_1, \ldots, Y_n)$.
This completes the proof.
\end{proof}

\section{A representation result for finite exchangeability}
\label{true}
It is natural to ask under what conditions can the 
representation formula \eqref{fre}
for an $n$-exchangeable probability measure  hold
with $\nu$ a probability measure.
We give a criterion in terms of a functional defined below
that uses the notions and the theorem developed in this paper.

Let $X=(X_1,\ldots,X_n)$ have exchangeable law in $S^n$. 
For $g: S^n \to \R$ bounded and measurable and $\pi$ a
probability measure on $S$ let
\begin{equation}
\label{In}
I(g,\pi) := \int_{S^n} g(x_1,\ldots, x_n) \pi(dx_1) \cdots \pi(dx_n).
\end{equation}
Clearly, $g \mapsto I(g,\pi)$ is linear. Let
\[
\|I(g,\cdot)\| := \sup_{\pi\in \PP(S)} |I(g,\pi)|,
\]
which, for $g$ bounded and measurable, is finite since 
$\|I(g,\cdot)\| \le \|g\|$.
A simple consequence of \eqref{fre}
is that if $I(g,\pi)=I(h,\pi)$ for all $\pi \in \PP(S)$ then
$\E g(X_1,\ldots,X_n) = \E h(X_1,\ldots,X_n)$.

Hence the assignment
\[
\T : I(g,\cdot) \mapsto \E g(X_1,\ldots,X_n)
\]
is a function from the linear space $\{I(g,\cdot),\, g \in b(S^n)\}$
into $\R$. Clearly, it is linear; it is also bounded because
\[
|\E g(X_1,\ldots,X_n)| = \left|\int_{\PP(S)} I(g,\pi)\, \nu(d\pi)\right|
\le \|I(g,\cdot)\|\, \|\nu\|,
\]
where
\[
\|\nu\| = \nu^+(S)+\nu^-(S),
\]
which is finite by Theorem \ref{rrr}.
So, the norm $\|\mathcal T\|$ of $\mathcal T$ satisfies
\begin{equation}
\label{Tnnorm}
1\le \|\T\| = \sup_{g} \frac{|\E g (X_1, \ldots, X_n)|}{\|I(g,\cdot)\|}
\le \|\nu\| < \infty,
\end{equation}
the inequality being true for any signed measure $\nu$ satisfying 
\eqref{fre}.
That $\|\T\| \ge 1$ is clear from the choice $g=1$.
In the sequel, we will assume that $S$ is a locally compact Hausdorff space.
The Baire $\sigma$-algebra of $S$ is the $\sigma$-algebra generated
by the class $C_c(S)$ of continuous functions $f: S\to \R$ with compact 
support.
This, as in Hewitt and Savage \cite{HewSav55},
will guarantee that de Finetti's theorem holds.
Other conditions are, of course possible. For example,
assuming that $S$ is a locally compact Polish space will also work.
\begin{theorem}
\label{infie}
Let $X=(X_1, \ldots, X_n)$ be an exchangeable random element of $S^n$.
Suppose that the hypotheses of Theorem \ref{ke} hold.
In addition, assume that $\SS$ is the Baire $\sigma$-algebra. 
Then the following three assertions are equivalent: 
\begin{enumerate}
\item  $X$ is $N$-extendible for all $N \ge n$ (or, equivalently, 
by Theorem \ref{ine}, $X$ is infinitely extendible)
\item $\|\mathcal{T}\|=1;$
\item there exists a probability measure $\nu$ on $\PP(S)$ 
satisfying (\ref{fre}).
\end{enumerate}
\end{theorem}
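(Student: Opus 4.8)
The plan is to prove the equivalence by establishing the cycle $(3) \Rightarrow (1) \Rightarrow (2) \Rightarrow (3)$, leveraging the functional-analytic machinery already developed. The implication $(3) \Rightarrow (1)$ is the trivial direction: if $\nu$ is a genuine \emph{probability} measure satisfying \eqref{fre}, then $P_n(A) = \int_{\PP(S)} \pi^n(A)\, \nu(d\pi)$ is manifestly a mixture of product measures, and such a measure is $N$-extendible for every $N \ge n$ by simply using $\int \pi^N(\cdot)\, \nu(d\pi)$ as the extension. The implication $(1) \Rightarrow (2)$ should follow from comparing the two functionals $\T$ and $\mathcal E^N_n$. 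First I would observe that $I(g,\pi) = \int_{S^n} g\, d\pi^n = \lim_N \pi^N(U^N_n g)$, which ties $I(g,\cdot)$ to the symmetrizations $U^N_n g$; since each $\pi^N$ is a specific exchangeable (indeed i.i.d.) probability measure on $S^N$, we have $|I(g,\pi)| \le \|U^N_n g\|$ for every $N$, so that $\|I(g,\cdot)\| \le \|\bm[g\bm]\|$. Assuming $(1)$, Theorem \ref{ine} gives infinite extendibility, and the existence of the norm-one functional $\L$ constructed in its proof (or, more directly, $\|\L^0\| = 1$ on $\bm[\Phi_n\bm]$) should force $\|\T\| \le 1$; combined with $\|\T\| \ge 1$ this yields $(2)$.

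The heart of the proof is the implication $(2) \Rightarrow (3)$, and this is where I expect the main obstacle to lie. The plan is to use $\|\T\| = 1$ to promote $\T$ to a positive linear functional on a suitable function space and then invoke a Riesz-type representation to produce the probability measure $\nu$ on $\PP(S)$. Concretely, the domain of $\T$ is the linear space $\{I(g,\cdot) : g \in b(S^n)\}$ of functions on $\PP(S)$; since $\|\T\| = 1$ and $\T(I(1,\cdot)) = \T(1) = 1$, Lemma \ref{Lsimple} (applied with $V$ this domain) shows $\T$ is \emph{monotone}. The key point is that $I(g,\pi)$, viewed as a function of $\pi \in \PP(S)$, is continuous when $\PP(S)$ carries the topology of weak convergence and $g$ is taken continuous with compact support; under the Baire $\sigma$-algebra assumption and the resulting validity of de Finetti's theorem (as invoked from Hewitt and Savage), $\PP(S)$ with this topology is the right carrier space. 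I would then restrict $\T$ to the image of $C_c(S^n)$, show this image is a subspace of $C(\PP(S))$ (or of the continuous functions vanishing appropriately), and use the Riesz representation theorem on $\PP(S)$ to obtain a nonnegative measure $\nu$ with $\nu(\PP(S)) = \T(1) = 1$, i.e.\ a probability measure, representing $\T$.

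The main difficulty will be matching the measure-theoretic and topological bookkeeping so that the representing measure $\nu$ genuinely satisfies \eqref{fre} for \emph{all} bounded measurable $g$, not merely for continuous compactly supported ones. The plan is to first obtain the representation $\E g(X_1,\ldots,X_n) = \int_{\PP(S)} I(g,\pi)\, \nu(d\pi)$ for $g \in C_c(S^n)$ directly from the Riesz step, and then bootstrap to all bounded measurable $g$ using outer regularity and tightness of the law of $X_1$ (exactly the hypotheses imported from Theorem \ref{ke}), together with a monotone-class or dominated-convergence argument to pass from continuous test functions to indicators of rectangles and thence to all of $\SS^n$. One must check that $\pi \mapsto I(g,\pi) = \pi^n(A)$ remains $\nu$-measurable for $A \in \SS^n$, which is where the measurable structure on $\PP(S)$ described in the introduction is used. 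Once \eqref{fre} holds with this nonnegative, total-mass-one $\nu$, assertion $(3)$ follows and the cycle closes.
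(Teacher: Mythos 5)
Your leg $(3)\Rightarrow(1)$ is correct (a mixture of product measures extends trivially via $\int\pi^N(\cdot)\,\nu(d\pi)$), but the other two legs of your cycle are exactly the two implications the paper deliberately avoids proving directly, and both have genuine gaps. For $(1)\Rightarrow(2)$ your inequality points the wrong way: from $I(g,\pi)=\int_{S^N}(U^N_n g)\,d\pi^N$ you correctly get $\|I(g,\cdot)\|\le\inf_{N\ge n}\|U^N_n g\|=\|\bm[g\bm]\|$, but then $|\E g(X)|/\|I(g,\cdot)\|\ge|\E g(X)|/\|\bm[g\bm]\|$, so $\|\L^0\|=1$ yields only $\|\T\|\ge 1$, which is already contained in \eqref{Tnnorm}. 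To conclude $\|\T\|\le 1$ you would need the \emph{reverse} bound $\|\bm[g\bm]\|\le\|I(g,\cdot)\|$; this is true, but it needs an extra sampling-approximation argument nowhere in your sketch (e.g.\ a Diaconis--Freedman type estimate $\sup_{x\in S^N}|U^N_n g(x)-I(g,\epsilon_x/N)|\le \|g\|\,n(n-1)/N$, comparing draws without and with replacement from the empirical measure). The paper instead proves $(1)\Rightarrow(3)$ first: Theorem \ref{ine} produces an infinite exchangeable extension, and then Theorem 7.4 of Hewitt and Savage \cite{HewSav55} --- this is precisely where the Baire hypothesis is used, a point your proposal leaves vague --- yields a probability measure $\nu$; after that, $(3)\Rightarrow(2)$ is a one-liner from \eqref{Tnnorm} because $\|\nu\|=1$.

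The deeper problem is your main leg $(2)\Rightarrow(3)$. The Riesz step is not available as stated: when $S$ is noncompact, $\PP(S)$ with the weak topology is neither compact nor locally compact, and the functions $I(g,\cdot)$, $g\in C_c(S^n)$, are not compactly supported on $\PP(S)$, so the Riesz representation theorem does not apply on that space. The natural repair --- compactify by the set of subprobability measures, i.e.\ the positive part of the unit ball of $C_0(S)^*$ --- reintroduces the escape-of-mass problem: the representing measure may charge defective measures, and on the compactification $I(1,\cdot)$ is no longer the constant function $1$, so even your appeal to Lemma \ref{Lsimple} for positivity breaks down (the subspace no longer contains the constants). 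Pushing the mass back onto $\PP(S)$ via tightness and then bootstrapping \eqref{fre} from $C_c$ test functions to all of $\SS^n$ would amount to reproving the Hewitt--Savage representation from scratch. The paper sidesteps all of this by never proving $(2)\Rightarrow(3)$ directly: it proves $(2)\Rightarrow(1)$ by the one-line estimate $\|I(g,\cdot)\|\le\|U^N_n g\|$, hence $\|\mathcal E^N_n\|\le\|\T\|=1$, and then invokes Theorem \ref{ke} for each $N\ge n$; the cycle closes through $(1)\Rightarrow(3)\Rightarrow(2)$. Reorganize your argument along those lines; as it stands, only your $(3)\Rightarrow(1)$ survives.
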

\begin{proof}
$1\Rightarrow 3$: If $X=(X_1,\ldots,X_n)$ is $N$-extendible for all $N \ge n$ 
there is, by Theorem \ref{ine}, an exchangeable 
random sequence $Y= (Y_1, Y_2, \ldots)$ such that
$(X_1,\ldots,X_n) \eqdist (Y_1,\ldots, Y_n)$.
Since $\SS$ is the Baire $\sigma$-algebra, 
Theorem 7.4 of Hewitt and Savage \cite{HewSav55} applies:
there exists a probability measure $\nu$ on $\PP(S)$ such that:
\[
\P(Y\in A)=\int_{\PP(S)}\pi^{\infty}(A)\,\nu(d\pi), \quad A\in\SS^{\N},
\]
and hence 
\[
\P(X\in B)=\int_{\PP(S)}\pi^{n}(B)\,\nu(d\pi), \quad B\in\SS^{n}.
\]
$3\Rightarrow 2$:
Since the last display holds with $\nu$ a probability  measure,
\eqref{Tnnorm} gives $\1\le \|\mathcal T\| \le \|\nu\|=1$.
\\
$2\Rightarrow 1$: Let $N\geq n$.
Using symmetry, \eqref{In} gives
\[
I(g,\pi) =\int_{S^{N}}(U^N_n g)(x_1,\ldots,x_N)\,\pi(dx_1)\cdots\pi(dx_N).
\]
Hence
\[
\|I(g,\cdot)\| 
\le \sup_{x\in S^{N}}|(U^N_n g)(x)|=\|U^N_n g\|.
\]
By \eqref{Tnnorm}, and \eqref{Enorm}
\[
1=\|\T\| \ge \sup_{g} \frac{|\E g(X)|}{\|U^N_ng\|}
=\|\mathcal E^N_n\|,
\]
Since $|\mathcal E^N_n\| \ge 1$, we have $\|\mathcal E^N_n\|=1$.
By Theorem \ref{ke} we conclude that
$(X_1,\ldots,X_n)$ is $N$-extendible.
\end{proof}

\section{Additional results and comments}
\label{addi}

\subsection{A different version of Theorem \ref{ke}.}
Theorem \ref{ke} proved in this paper states that, under suitable
assumptions, an exchangeable random element $(X_1,\ldots, X_n)$ of $S^n$
is $N$-extendible if and only if
$\sup_g |\E g(X)|/\|U^N_n g\|=1$, where the supremum is taken over
all bounded measurable functions $g: S^n \to \R$. We wish to 
see whether the supremum can be reduced to a smaller class of functions.
For example, suppose that $\SS_0$ is an algebra generating the Borel
sets $\SS$ of $S$. Replacing the tightness and outer regularity assumptions
by assumptions that involve the class $\SS_0$ allows us to consider the
supremum over the class of sets that are linear combinations of 
of indicators $\1_{V_1\times\cdots\times V_n}$ with $V_i \in \SS_0$.
The assumptions imposed by the next theorem are satisfied in all
natural cases.
The proof follows closely the proof of Theorem \ref{ke} and thus
is only sketched.
\begin{theorem}
\label{keext}
Let $\SS_0$ be an algebra of subsets of the locally compact Hausdorff
space $S$ generating its Borel sets. Let $(X_1,\ldots,X_n)$ be 
an exchangeable random element of $S^n$. Assume:
\\
(a) For all $\epsilon >0$ there exists $V \in \SS_0$ and compact set $K$
such that $V \subset K$ and  $\P(X_1 \in V) \ge 1-\epsilon$.
\\
(b) For all $\epsilon >0$ and all $V \in \SS_0$ there exists open
set $O$ and $W \in \SS_0$ such that $V \subset O \subset W$ and
$\P(X_1 \in W\setminus V) \le \epsilon$.
\\
Then, for $N > n$, $(X_1,\ldots,X_n)$ is $N$-extendible if and only if
\begin{equation}
\label{gres}
\sup_g |\E g(X)|/\|U^N_n g\|=1,
\end{equation}
where $g$ in the supremum ranges over
the set of all 
linear combinations
of indicators $\1_{V_1\times\cdots\times V_n}$ with $V_i \in \SS_0$.
\end{theorem}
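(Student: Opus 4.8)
The plan is to replay the proof of Theorem \ref{ke} essentially verbatim, the only new ingredient being the bookkeeping needed to keep every auxiliary function inside the linear span $\mathcal D$ of the indicators $\1_{V_1\times\cdots\times V_n}$ with $V_i\in\SS_0$; hypotheses (a) and (b) are engineered precisely so that this is possible. I begin with the necessity of \eqref{gres}. Since an algebra contains $S$, we have $\1_{S^n}\in\mathcal D$ and $U^N_n\1_{S^n}=\1_{S^N}$, so the constant function shows the supremum in \eqref{gres} is at least $1$. Conversely, if $X$ is $N$-extendible then $\|\mathcal E^N_n\|=1$ by Lemma \ref{necessary}, and restricting the supremum in \eqref{Enorm} to $g\in\mathcal D$ cannot increase it; hence \eqref{gres} holds with equality.

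For sufficiency, assume \eqref{gres}. By Lemma \ref{pcrucial} the assignment $\mathcal E:U^N_n g\mapsto\E g(X)$ is well defined on $U^N_n\mathcal D$, and \eqref{gres} says its norm is $1$. Exactly as in Lemma \ref{propertiesofL}(i) I would invoke Hahn--Banach and symmetrize by $U^N_N$ to obtain a symmetric functional $\LL$ on $b(S^N)$ extending $\mathcal E$ on $U^N_n\mathcal D$, with $\|\LL\|=1$ and $\LL(1)=1$; Lemma \ref{Lsimple} then makes $\LL$ monotone. The $C_c(S^N)$ step of Lemma \ref{Lboundedcompacts} now uses (a) in place of tightness: pick $V\in\SS_0$ with $V\subset K$ compact and $\P(X_1\in V)\ge 1-\epsilon$, and run the argument with $g=\1_{V^n}\in\mathcal D$, so that $\LL(U^N_n\1_{V^n})=\mathcal E(U^N_n\1_{V^n})=\P(X\in V^n)$ is legitimate. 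As $V^N\subset K^N$ is compact, Urysohn's lemma supplies $F\in C_c(S^N)$ with $\1_{V^N}\le F\le 1$, and monotonicity gives $\LL(F)\ge 1-N\epsilon$; hence $\LL$ restricted to $C_c(S^N)$ has norm $1$, and the Riesz representation theorem yields a probability measure $Q$ on $S^N$ representing $\LL$ there and satisfying \eqref{Ropen}. Symmetry of $\LL$ makes $Q$ exchangeable.

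The heart of the matter is the outer-regularity step, where (b) enters. Fix an $\SS_0$-rectangle $R=V_1\times\cdots\times V_N$ and $0<\epsilon<1/N^2$; by (b) choose open $O_j$ and $W_j\in\SS_0$ with $V_j\subset O_j\subset W_j$ and $\P(X_1\in W_j\setminus V_j)\le\epsilon$, and set $B:=\bigcup_{j=1}^N(W_j\setminus V_j)$, which lies in $\SS_0$ because $\SS_0$ is an algebra. The decisive observation is that the test function required here is $\1_{(B^c)^n}=\prod_{i=1}^n\1_{B^c}(x_i)$, an $\SS_0$-rectangle indicator and hence in $\mathcal D$; this is exactly what makes $\LL(U^N_n\1_{(B^c)^n})=\mathcal E(U^N_n\1_{(B^c)^n})=\P((X_1,\ldots,X_n)\notin F_\epsilon)\ge 1-nN\epsilon$ available. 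With $O_\epsilon:=O_1\times\cdots\times O_N$ open and $G_\epsilon:=O_\epsilon\setminus R$, the same counting of injections as in Theorem \ref{ke} gives $G_\epsilon\subset\{U^N_n\1_{(B^c)^n}\le(N-n)/N\}$, and Lemma \ref{propertiesofL}(iii) yields $\LL(\1_{G_\epsilon})\le N^2\epsilon$. Therefore $Q(R)\le Q(O_\epsilon)\le\LL(\1_{O_\epsilon})=\LL(\1_R)+\LL(\1_{G_\epsilon})\le\LL(\1_R)+N^2\epsilon$, and letting $\epsilon\downarrow 0$ gives $Q(R)\le\LL(\1_R)$. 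By additivity this persists on the algebra $\mathcal A_0$ generated by the $\SS_0$-rectangles, and the complementation identity $Q(A)+Q(A^c)=1=\LL(\1_A)+\LL(\1_{A^c})$ upgrades it to $Q=F_\LL$ on all of $\mathcal A_0$.

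To finish, I would run the computation of Lemma \ref{CaraLike} for $g=\1_A$ with $A$ an $\SS_0$-rectangle in $S^n$; each $\sigma^{-1}A$ (for $\sigma\in\S[n,N]$) is then an $\SS_0$-rectangle lying in $\mathcal A_0$, so the chain of equalities there produces $\P(X\in A)=\int_{S^N}U^N_n\1_A\,dQ=Q(A\times S^{N-n})$. Since the $\SS_0$-rectangles form a $\pi$-system generating $\SS^n$, Dynkin's uniqueness theorem shows the two probability measures $A\mapsto\P(X\in A)$ and $A\mapsto Q(A\times S^{N-n})$ agree on all of $\SS^n$, so $Q$ is the desired exchangeable $N$-extension. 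The one genuine obstacle is the one flagged throughout — confining every auxiliary function to $\mathcal D$ — and (a) and (b) are precisely the hypotheses that let the compact set and the outer approximations be taken within $\SS_0$; the concluding Dynkin step is what propagates agreement from $\SS_0$-rectangles to arbitrary measurable sets.
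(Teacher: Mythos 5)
Your proposal is correct and follows essentially the same route as the paper's own (sketched) proof: Hahn--Banach extension and $U^N_N$-symmetrization of $\mathcal E^N_n$ restricted to the span of $\SS_0$-rectangle indicators, hypothesis (a) driving the Lemma \ref{Lboundedcompacts} argument on $C_c(S^N)$, Riesz representation, and hypothesis (b) supplying the sets $W_{j}\in\SS_0$ so that the test function $\1_{(B^c)^n}$ stays in the admissible class, yielding $Q(R)\le\LL(\1_R)$ on $\SS_0$-rectangles and then equality by complementation. Your concluding $\pi$-system/Dynkin step, propagating $\P(X\in A)=Q(A\times S^{N-n})$ from $\SS_0$-rectangles to all of $\SS^n$, correctly fills in a detail the paper's sketch leaves implicit (its Lemma \ref{CaraLike} is stated for the algebra of all measurable rectangles, so it cannot be cited verbatim here).
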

\begin{proof}[Sketch of proof]
We only need to prove sufficiency. 
Denote by $\mathfrak F$ the class of functions that are linear combinations 
of indicators $\1_{V_1\times\cdots\times V_n}$ with $V_i \in \SS_0$.
Assume that the 
supremum in \eqref{gres}, over the class $\FF$ of $g$'s equals 1.
This assumption is equivalent to having the norm of $\mathcal E^N_n$ on
$\mathfrak F$ equal to 1. As in Lemma \ref{propertiesofL}(i),
we can extend this restriction to the set $b(S^N)$ of all
bounded measurable functions on $S^N$, using the Hahn-Banach theorem,
without increasing its norm. We can also assume that the extension is
a symmetric operator. Denote it by $\LL$. Since $\LL(1)=1$ and 
$\|\LL\|=1$ we have, as in Lemma \ref{propertiesofL}(ii)
that $\LL$ is monotone and hence $\LL(\1_A) \ge 0$ for all
Borel $A \subset S^N$.
Using (a), and arguing precisely as in Lemma \ref{Lboundedcompacts},
we obtain that $\LL$ has norm $1$ on $C_c(S^N)$.
We then proceed as in the proof of Theorem \ref{ke} to
extract, via Riesz representation, a probability measure $Q$ on $S^N$
that satisfies \eqref{Ropen} and which is also symmetric. 
It then suffices to show that if $R=V_1\times \cdots \times V_N$
is a rectangle with $V_i \in \SS_0$, we have
$Q(R) \le \LL(\1_R)$.
Let $\epsilon > 0$.
Using (b) we select $W_{i,\epsilon} \in \SS_0$ 
and open sets $O_{i,\epsilon}$ such
that $V_i \subset O_{i,\epsilon} \subset W_{i,\epsilon}$
and $\P(W_{i,\epsilon} \setminus V_i) \le \epsilon$.
This is used to prove
that $G_\epsilon:= (W_{1,\epsilon} \times \cdots \times W_{N,\epsilon})
\setminus R$
satisfies
\[
\LL(\1_{G_\epsilon}) \le N^2 \epsilon.
\]
Since $O_\epsilon:= O_{1,\epsilon} \times \cdots \times O_{N,\epsilon}$
is an open set containing $R$ we have
\[
Q(R) \le Q(O_\epsilon) \le \LL(\1_{O_\epsilon}) \le
\LL(\1_R)+N^2\epsilon,
\]
where we used \eqref{Ropen} for the second inequality
and the previous display for the last. Letting 
$\epsilon \downarrow 0$, we conclude.
\end{proof}
\begin{remark}
Property (a) of Theorem \ref{keext} clearly implies tightness.
Using the monotone class theorem, we can also show that 
property (b) implies outer regularity.
On the other hand, the assumptions of
Theorem \ref{ke} do imply those of Theorem \ref{keext} provided that
the algebra $\SS_0$ and the space $S$ are suitable;
for example, with $S = \R$ and $\SS_0$ the algebra generated
by intervals.
\end{remark}

\subsection{Extendibility under limits.}
First, we observe that extendibility is a property that remains
true under limits in total variation.
Assume that $S$ is a locally compact Hausdorff space.
Let $X^i=(X^i_1,\ldots, X^i_n)$, $i=1,2,\ldots$, be a sequence
of exchangeable random elements that are $N$-extendible such that
$X^i$ converges to $X=(X_1,\dots,X_n)$ in total variation.
Assume that the law of $X_1$ is tight and outer regular.
Then $X$ is $N$-extendible.
Indeed, $X$ is clearly exchangeable, but that it is $N$-extendible:
By the total variation convergence we have
\[
\lim_{i \to \infty} \E g(X^i) = \E g(X)
\]
for all bounded measurable $g:S^n \to \R$. If $\mathcal E^N_n$
is the primitive extending functional of $X$ then, by
\eqref{Enorm}, we have $\|\mathcal E^N_n\| \le 1$.
The claim follows from Theorem \ref{ke}.

On the other hand, we have the following result that, roughly speaking,
says that if we have extendible probability measures $P_k$ on coarse 
$\sigma$-algebras whose union generates the full $\sigma$-algebra in
a way that $P_k$ converges to $P$ in a certain sense, then
$P$ is extendible:
\begin{theorem}
\label{extcoarse}
Let $\SS_0$ be an algebra of subsets of $S$ generating $\SS$.
Let all assumptions of Theorem \ref{keext} hold.
In addition, suppose that there is 
an increasing family $\GG_1 \subset \GG_2 \subset \cdots$ of 
$\sigma$-algebras on $S$ such that $\bigcup_k \GG_k=\SS_0$.
For each $k$, let $P_k$ be a probability measure on $S^n$ defined
on $\GG_k^{n}$ (the product $\sigma$-algebra) such that 
\begin{equation}
\label{coarsesense}
\lim_{k \to \infty} \sup_{A \in \GG_k^n} |P_k(A)-P(A)| =0,
\end{equation}
where $P$ is the law of $(X_1,\ldots, X_n)$ and assume
that, for some $N > n$, and all $k \ge 1$, $P_k$ is $N$-extendible.
Then $P$ is $N$-extendible.
\end{theorem}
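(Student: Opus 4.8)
The plan is to reduce the statement to the criterion of Theorem~\ref{keext} by checking that the norm of the primitive extending functional $\mathcal E^N_n$ associated to $P$ equals $1$ when its domain is restricted to the class $\mathfrak F$ of linear combinations of indicators $\1_{V_1\times\cdots\times V_n}$ with $V_i\in\SS_0$. Since each $P_k$ is $N$-extendible, Lemma~\ref{necessary} (or rather its restricted analogue used in the proof of Theorem~\ref{keext}) gives that for every $g$ in the corresponding class $\mathfrak F_k$ of indicators built from $\GG_k$-rectangles,
\[
|\mathbb E_{P_k} g(X^{(k)})| \le \|U^N_n g\|,
\]
where $\mathbb E_{P_k}$ denotes integration of $g$ against $P_k$. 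The goal is to pass this inequality to the limit and obtain $|\mathbb E_P g(X)| \le \|U^N_n g\|$ for every $g\in\mathfrak F$, which is exactly the condition $\|\mathcal E^N_n\|_{\mathfrak F}\le 1$ needed to invoke Theorem~\ref{keext}.

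First I would fix an arbitrary $g\in\mathfrak F$. Because $g$ is a finite linear combination of indicators $\1_{V_1\times\cdots\times V_n}$ with $V_i\in\SS_0=\bigcup_k\GG_k$ and the family $\GG_k$ is increasing, there exists $k_0$ such that all the sets $V_i$ appearing in $g$ lie in $\GG_{k_0}$; hence $g\in\mathfrak F_k$ for all $k\ge k_0$. For such $k$ the extendibility of $P_k$ yields $|\int g\,dP_k|\le\|U^N_n g\|$, with the right-hand side independent of $k$. It remains to show $\int g\,dP_k\to\int g\,dP$. Writing $g=\sum_j c_j\1_{A_j}$ with each $A_j$ a $\GG_{k}^n$-measurable rectangle for $k\ge k_0$, the convergence \eqref{coarsesense} gives $|P_k(A_j)-P(A_j)|\to 0$ for each $j$, and summing over the finitely many $j$ gives $\int g\,dP_k\to\int g\,dP$. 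Passing to the limit in the displayed inequality produces
\[
\left|\int_{S^n} g\,dP\right| \le \|U^N_n g\|.
\]
Taking the supremum over $g\in\mathfrak F$ (and recalling that the reverse inequality $\|\mathcal E^N_n\|\ge 1$ always holds via $g\equiv 1$) shows that the supremum in \eqref{gres} equals $1$.

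With the criterion \eqref{gres} verified, Theorem~\ref{keext} applies directly — its hypotheses (a) and (b) on $\SS_0$ relative to the tight, outer-regular law of $X_1$ are assumed in the statement — and concludes that $P$ is $N$-extendible, completing the proof. The step I expect to require the most care is the uniformity in $k$ needed to pass to the limit: one must be sure that once the finitely many sets defining a given $g$ enter $\GG_{k_0}$, the bound $|\int g\,dP_k|\le\|U^N_n g\|$ holds with a right-hand side that does not deteriorate as $k$ grows, and that the convergence $P_k(A_j)\to P(A_j)$ furnished by \eqref{coarsesense} is legitimately applicable to each fixed rectangle $A_j$ once $k\ge k_0$. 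Because $\mathfrak F$ consists of \emph{finite} linear combinations, no interchange of limit and infinite sum is involved, so this uniformity is genuinely the only subtle point; the remainder is a transcription of the argument already carried out for Theorem~\ref{keext}.
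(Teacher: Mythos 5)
Your proposal is correct and follows essentially the same route as the paper's proof: fix $g\in\mathfrak F$, observe that $g$ is $\GG_k^n$-measurable for all large $k$, use the $N$-extendibility of $P_k$ to get $\bigl|\int g\,dP_k\bigr|\le \|U^N_n g\|$, pass to the limit via \eqref{coarsesense}, and then invoke the sufficiency direction of Theorem~\ref{keext}. Your explicit remark that $\|U^N_n g\|$ is a sup over all of $S^N$ and hence does not depend on $k$ is a minor clarification of a point the paper leaves implicit, but the argument is the same.
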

\begin{proof}
Let $g \in \mathfrak F$, the class of real-valued functions on $S^n$
that are linear combination of indicators $\1_{V_1\times \cdots \times V_n}$
with $V_i \in \SS_0$ for all $i$. 
Then there is $k$ such that $g$ is $\GG_k$-measurable.
Our assumption then implies that $\int_{S^n} g dP_k \to \int_{S^n} g dP$
as $k \to \infty$. We now appeal to Theorem \ref{keext}.
Since $P_k$ is $N$-extendible, we have
$\sup_{g \in \mathfrak F} \big|\int g dP_k \big|/\|U^N_n g\|=1$.
Hence $\sup_{g \in \mathfrak F} \big|\int g dP \big|/\|U^N_n g\|=1$
also and so, by Theorem \ref{keext}, $P$ is $N$-extendible.
\end{proof}

We give an example of how this can be applied using  a known example
of an extendible distribution. Gnedin \cite{Gnedin96} shows that
if $(X_1,\ldots,X_n)$ is a random element of $\R^n_+$ with
density $f(x_1,\ldots,x_n)=g(x_1 \vee \cdots \vee x_n)$ for
some decreasing $g$ then it is infinitely extendible.
We will provide an alternative proof for this by constructing,
for each $j \in \N$, an infinitely-extendible probability
measure $P_j$ on some coarse $\sigma$-algebra $\FF_j$ of $S^n$,
increasing with $j$, such that $P_j$ approaches the law of
$(X_1,\ldots,X_n)$ in the sense of \eqref{coarsesense}.
Without loss of generality, let $n=2$.

Fix $j \in \N$. Let $\D_j$ be the set of rational numbers
$k/2^j$ for $k=1,\ldots, j 2^j$. This splits the positive
real line into a finite number of intervals: the bounded intervals
$I_k(j) := [(k-1)2^{-j}, k2^{-j})$ and the interval $I_0(j):=[j,\infty)$.
We let $\GG_j = \sigma(\Pi_j)$ and $\SS_0 = \bigcup_j \GG_j$.
It is easy to see that (a) and (b) of Theorem \ref{keext}
hold for the algebra $\SS_0$.
Next, let $\FF_j = \GG_j \otimes \GG_j$,
the corresponding product $\sigma$-algebra on $[0,\infty)\times[0,\infty)$.
To specify probability measure $P_j$ on the sets of $\FF_j$ it is enough
to specify it on the sets $I_k(j) \times I_\ell(j)$.
We let
\[
P_j(I_k(j) \times I_\ell(j)) = c_j\, g((k\vee \ell)/2^j), 
\quad 1\le k,\ell \le j2^j,
\]
and set $P_j(I_k(j) \times [j,\infty)) = P_j( [j,\infty) \times I_k(j))=0$.
The constant $c_j$ is just a normalization constant.
By construction, $P_j$ is exchangeable. We see that it
is infinitely extendible by observing that it is a mixture
of product measures.

For $1 \le r \le j2^j$, define the product probability measure $Q_r$
on $\FF_j$ by
\[
Q_r(I_k(j) \times I_\ell(j)) = \frac{1}{r^2},
\quad 1\le k,\ell \le j2^j,
\]
while $Q_r(I_k(j) \times [j,\infty)) = Q_r( [j,\infty) \times I_k(j))=0$.
Then, with $a_r = r/2^j$ for $r \le j 2^j$ and $a_{j2^j+1}=0$,
\[
P_j = c_j \sum_{r=1}^{j 2^j} [g(a_r)-g(a_{r+1})] r^2 Q_r  ,
\]
and the coefficients are positive due to the monotonicity of $g$.

We finally observe that, for all $i \in \N$,
\[
\max_{1 \le k,\ell \le i 2^i}
\bigg| P_j(I_k(i) \times I_\ell(i)) - \int_{I_k(i) \times I_\ell(i)}
g(x \vee y)\, dx dy \bigg| \to 0, \quad \text{as  $j \to \infty$},
\]
meaning that condition \eqref{coarsesense} of Theorem \ref{extcoarse}
is verified.

\section*{Appendix A.}

\paragraph{\bf 1) Covariance.}
Exchangeability imposes strong conditions on covariance for
second-order random variables. That is, if $(X_1,\ldots,X_n)$
is $n$-exchangeable random element of $\R^n$ with $\E X_1^2<\infty$
then it is easy to see  that $\cov(X_1,X_2) \ge -\var(X_1)/(n-1)$.
On the other hand, if $(X_1,X_2,\ldots)$ is an exchangeable sequence
of real random variables with finite variance then $\cov(X_1,X_2) \ge 0$.
However, if $(X_1,\ldots, X_n)$ is $n$-exchangeable, nonnegativity
of $\cov(X_1,X_2)$ is not at all sufficient for infinite extendibility.
For example, take $n=2$, $S=\{s_1,s_2,s_3,s_4\} = \{1,\,1.5,\,2,\,2.5\}$
and let $(X_1,X_2)$ take values $(s_1,s_2)$, $(s_2, s_1)$, $(s_3,s_4)$,
$(s_4,s_3)$ with probability $1/4$ each.
Then $\cov(X_1,X_2)= 3/16$ but it is easy to see that
\eqref{fre} cannot hold with $\nu$ a probability measure.
By Theorem \ref{infie} $(X_1,X_2)$ is not infinitely extendible.

\paragraph{\bf 2) An example of an extending functional not defining
a probability measure.}
We give an example of an extending functional $\LL$ such that
$A \mapsto \LL(\1_A)$ is not a probability measure.
See Remark \ref{Lnotmeas}.
Let $S=[0,1]$, the closed unit interval. Take $n=1$ and $N=2$
and start with the probability measure on $[0,1]$ to be the
uniform measure on the Borel sets $\BB$. 
Let $\Phi_i$ be the set of bounded measurable real-valued
functions on $[0,1]^i$, $i=1,2$.

For $g \in \Phi_1$ we have $(U_1^2 g)(x_1,x_2)= \frac{1}{2}(g(x_1)+g(x_2))$.
The primitive extending functional $\mathcal E$ maps $U_1^2 g$
to $\int_0^1 g(t) dt$. 

We now construct a particular $2$-extending functional $\L$.
We let $\mathfrak F$ consist of functions of the form
\begin{equation}
\label{Fform}
F(x,y)= \sum_{i=0}^m c_i \1\{x \in A_i,y\in B_i\} , \quad x_i \in \R,
\, A_i, B_i \in \BB.
\end{equation}
$D:=\{(t,t):\, 0 \le t \le 1\}$ and
let $\mathfrak D$ consist of functions of the form 
\begin{equation*}
G=F+c1_D, \quad F \in \mathfrak F, \quad c \in \R.
\end{equation*}
Define the symmetric functional $\L_0: \mathfrak D \to \R$ by
\begin{equation}
\label{LD}
\L_0(F+c1_D) := \E F(X,X) = \int_0^1 F(t,t)\, dt.
\end{equation}
We claim that $\|\L_0\|=1$. See below for the proof of this claim.
Since $\mathfrak D$ is a normed linear subspace of $\Phi_2$
there exists (by the Hahn-Banach theorem) a symmetric linear functional
$\L: \Phi_2 \to \R$  such that
$\L=\L_0$ on $\mathfrak D$ and $\|\L\| = \|\L_0\|=1$.

We show that $\L$ is a 2-extending functional, i.e.,
that $\L(U_1^2 g) = \int_0^1 g(t)\, dt$, for all $g \in \Phi_1$.
Let $f_n$ (respectively, $h_n$) be an increasing
(respectively, decreasing) sequence of simple functions on $[0,1]$
(i.e., linear combinations of finitely many 
indicator functions of Borel subsets of $[0,1]$)
such that $f_n \uparrow g$ (respectively, $h_n \downarrow g$).
We have $f_n \le g \le h_n$ for all $n$, and so 
$U_1^2f_n \le U_1^2g \le U_1^2h_n$.
Since $\|\L\|=1$,
by Lemma \ref{Lsimple}, 
$\L$ is a monotone operator.
Hence $\L(U_1^2f_n) \le \L(U_1^2g) \le \L(U_1^2h_n)$
for all $n$. Since $U_1^2 f_n \in \mathfrak F$, we have
$\L(U_1^2 f_n) = \L_0(U_1^2 f_n) = \int_0^1 f_n(t)\, dt$.
Similarly, $\L(U_1^2 h_n) = \int_0^1 h_n(t)\, dt$.
By monotone convergence, $\lim_{n\to\infty}\int_0^1 f_n(t)\, dt =
\lim_{n\to\infty}\int_0^1 h_n(t)\, dt = \int_0^1 g(t)\, dt$.
Therefore, $\L(U_1^2g) = \int_0^1 g(t)\, dt = \mathcal E (U_1^2 g)$,
showing that $\L$ is an extension of $\mathcal E$. 

As in the proof of Theorem \ref{ke}, the functional $\L$ 
restricted on the space $C([0,1]\times[0,1])$ of continuous functions on $[0,1]
\times [0,1]$
admits the Riesz representation 
\[
\L F = \int_{[0,1]\times[0,1]} F(x,y)\, Q(dx,dy),
\quad F \in C([0,1]\times[0,1]),
\]
for some probability measure $Q$
on $[0,1]\times[0,1]$ and this $Q$ is a $2$-extension of the
law of $X$. 
It is easy to see that $Q$ is the law of $(X,X)$.

We have (as in the proof of Theorem \ref{ke}) 
$\LL(\1_R) = Q(R)$ for all rectangles $R=A\times B$, $A,B \in \BB$.

If $A \mapsto \L(\1_A)$ were a probability measure
on the Borel sets $A$ of $[0,1]\times [0,1]$ we would certainly have
$\L(\1_A) = Q(A)$ for all Borel $A \subset [0,1]\times [0,1]$.
But $\L(\1_D) =\L_0(\1_D)=0$ in contradiction to
$Q(D) = \P((X,X)\in D) = 1$.

\begin{proof}[Proof of the claim that $\L_0$ has norm $1$]
We need to show that $|\L_0(F + c \1_D)| \le \|F+c \1_D\|$, for
all $F \in \mathfrak F$ and all $c \in \R$. If $c=0$, the inequality holds.
If $c\neq 0$, divide by $c$ and use \eqref{LD} to reduce the claim
to the proof of the inequality
\[
\bigg|\int_0^1 F(t,t)\, dt \bigg| 
\le  \max_{x\neq y} |F(x,y)| \vee \max_t |F(t,t)+1|,
\quad F \in \mathfrak F.
\]
We consider two cases. If $\max_{x \neq y} |F(x,y)| =
\max_{x,y} |F(x,y)|$ then
$$\big|\int_0^1 F(t,t)\, dt \big|  \le \max_{x,y} |F(x,y)|
= \max_{x \neq y} |F(x,y)| \le \max_{x\neq y} |F(x,y)| \vee \max_t |F(t,t)+1|.$$
If not, there is $t_0 \in [0,1]$ such that $|F(t_0,t_0)| > \max_{x \neq y}
|F(x,y)|$.
Since $F$ can be written as in \eqref{Fform} with pairwise disjoint
$R_i$, it follows that one of the $R_i$ must be a singleton,
say, $R_0=\{t_0\}\times\{t_0\}$. Without loss of generality,
assume that this is the only singleton among the $R_i$'s.
Then $F = c_0\1_{R_0} + H$, where $H = \sum_{i=1}^m c_i \1_{R_i}$
has the property of case 1, i.e., 
$\max_{x \neq y} |H(x,y)| = \max_{x,y} |H(x,y)|$.
Then 
\begin{align*}
\big|\int_0^1 F(t,t)\, dt \big|
= \big|\int_0^1 H(t,t)\, dt \big|
&\le \max_{x,y}|H(x,y)|
= \max_{x \neq y} |H(x,y)| 
= \max_{x \neq y} |F(x,y)| &
\\
&\le \max_{x \neq y} |F(x,y)| \vee \max_t |F(t,t)+1|.&
\end{align*}
\end{proof}



\footnotesize





\begin{thebibliography}{50}

\bibitem{Aldous85}
D. Aldous.
Exchangeability and related topics.
In {\em \'Ecole d'\'ete de probabilit\'es Saint Flour} XIII--1983, 1--198,
Lecture Notes in Math., 1117, Springer, Berlin, 1985.


\bibitem{enc}
Convergence of measures. Encyclopedia of Mathematics. \\
URL: \href{http://www.encyclopediaofmath.org/index.php?title=Convergence_of_measures&oldid=30088}{\color{black} http://www.encyclopediaofmath.org/index.php?title=Convergence\_of\_measures\&oldid=30088}


\bibitem{crisma71}
L. Crisma.
\newblock Alcune valutazioni quantitative interessanti 
la proseguibilit{\`a} di processi aleatori scambiabili.
\newblock {\em Universit{\`a} degli Studi di Trieste. 
Dipartimento di Scienze Matematiche}, 1971.

\bibitem{crisma82}
L. Crisma.
\newblock Quantitative analysis of exchangeability in alternative processes.
\newblock {\em Exchangeability in Probability and Statistics} 
(G.\ Koch and F.\ Spizzichino, eds.), 207--216, 1982.

\bibitem{Ker14}
I. Dahmer, G. Kersting and A. Wakolbinger.
\newblock{The total external branch length of beta-coalescents.}
\newblock{\em Combin.\ Probab.\ Comput.} {\bf 23}, No.\ 06, 1010--1027, 2014.

\bibitem{DeFi31}
B. de~Finetti.
\newblock Funzione caratteristica di un fenomeno aleatorio.
{\em Atti R.\ Accad.\ Naz.\ Lincei} Ser 6, Memorie, Classe di Scienze, 
Fisiche, Matematiche e Naturali {\bf 4}, 251--299.
\newblock 1931.

\bibitem{DeFi37}
B. de~Finetti.
\newblock La pr{\'e}vision: ses lois logiques, ses sources subjectives.
\newblock {\em Ann. Inst. Henri Poincar\'e Probab. Stat.} {\bf 7}, 1--68, 1937. 

\bibitem{DeFi69}
B. de Finetti.
\newblock Sulla proseguibilit\`a di processi aleatori scambiabili.
\newblock {\em Rend.\ Istit.\ Mat.\ Univ.\ Thieste.} {\bf 1}, 
No.\ 5, 53--67, 1969.

\bibitem{Yuan13}
J-S. Dhersin, A. Siri-J\'egousse, F. Freund 
and L. Yuan.
\newblock On the length of an external branch in the beta-coalescent.
\newblock {\em Stochastic Process. Appl.} {\bf 123}, No.\ 5, 1691--1715, 2013.

\bibitem{Yuan15}
J-S. Dhersin and L. Yuan.
\newblock On the total length of external branches for beta-coalescents.
\newblock {\em Adv. in Appl. Probab.} {\bf 47}, No.\ 3,  693--714, 2015.

\bibitem{Diaconis77}
P. Diaconis.
\newblock Finite forms of de Finetti's theorem on exchangeability.
\newblock {\em Synthese} {\bf 36}, No.\ 2, 271--281, 1977.

\bibitem{DiaFree80}
P. Diaconis and D. A. Freedman.
\newblock Finite exchangeable sequences.
\newblock {\em Ann. Probab.} {\bf 8}, 745--764, 1980.

\bibitem{DubFree79} 
L. E.\ Dubins and D. A.\ Freedman.
\newblock Exchangeable processes need not be mixtures of independent,
identically distributed random variables.
\newblock {\em Zeit.\ Wahrscheinlichkeitstheorie Verw.\
Gebiete} {\bf 48}, No.\ 2, 115--132, 1979.

\bibitem{DUDLEY02}
R. Dudley.
{\em Real Analysis and Probability.} Revised reprint of the 1989 original.
Cambridge University Press, Cambridge, 2002.

\bibitem{Dynkin53}
E. B.\ Dynkin.
\newblock Klassy ekvivalentnyh slucainyh velicin.
\newblock {\em Uspehi Mat.\ Nauk} {\bf 6}, 125--134, 1953.

\bibitem{FG97}
B. E. Fristedt and L. F. Gray.
{\em A Modern Approach to Probability Theory.}
Probability and its Applications. Birkh\"auser, Boston, 1997

\bibitem{Gnedin96}
A. V.\ Gnedin.
\newblock A class of exchangeable sequences.
\newblock {\em Statist. Probab. Lett.} {\bf 28}, 159--164, 1996.

\bibitem{HewSav55}
E. Hewitt and L. J.\ Savage.
\newblock Symmetric measures on Cartesian products.
\newblock {\em Trans. Amer. Math. Soc.} {\bf 80}, 470--501, 1955.

\bibitem{Ker11}
S. Janson and  G. Kersting.
\newblock{On the total external length of the Kingman coalescent.}
\newblock{\em Electron.\ J.\ Probab.} {\bf 16}, No.\ 6, 16, 2203--2218, 2011.

\bibitem{JKY}
S. Janson, T. Konstantopoulos and L. Yuan.
On a representation theorem for finitely exchangeable random vectors.
{\em J.\ Math.\ Anal.\ Appl.} {\bf 442}, No.\ 2, 703--714, 2016. 

\bibitem{Jay86}
E.T.\ Jaynes.
\newblock Some applications and extensions of the de Finetti 
representation theorem.
\newblock {\em Bayesian Inference and Decision Techniques.} {\bf 31}, 2006.

\bibitem{Ka02}
O. Kallenberg.
{\em Foundations of Modern Probability.} 
Springer, 2002.

\bibitem{Ka05}
O. Kallenberg.
{\em Probabilistic Symmetries and Invariance Principles.}
Springer, 2005.


\bibitem{KerSze06}
G. J. Kerns and G. J.\ Sz\'ekely.
\newblock De Finetti's theorem for abstract finite exchangeable sequences.
\newblock {\em J. Theoret. Probab.} {\bf 19}, No.\ 3, 589--608, 2006.


\bibitem{chalmers07}
T. Liggett, J. E.\ Steif and B. T\'oth.
\newblock  Statistical mechanical systems on complete graphs, 
infinite exchangeability, finite extensions 
and a discrete finite moment problem.
\newblock {\em Ann. Probab.} {\bf 35}, 867--914, 2007.

\bibitem{NevFei65}
J. Neveu and A. F. Stein.
\newblock {\em Mathematical Foundations of the Calculus of Probability},
Vol.\ 1.
\newblock Holden-Day, San Francisco, 1965.

\bibitem{rudin1987real}
W. Rudin.
\newblock {\em Real and Complex Analysis.}
\newblock Tata McGraw-Hill, 1987.

\bibitem{RN}
C.\ Ryll-Nardzewski. 
\newblock{On stationary sequences of random variables and 
the de Finetti's equivalence.}
\newblock{\em Colloq.\ Math.} {\bf 4}, 149--156, 1957

\bibitem{scar85}
M. Scarsini.
\newblock Lower bounds for the distribution function of a $k$-dimensional 
$n$-extendible exchangeable process.
\newblock {\em Statist.\ Probab.\ Lett.} {\bf 3}, 57--62, 1985.

\bibitem{scarver93}
M. Scarsini and L. Verdicchio.
\newblock On the extendibility of partially exchangeable random vectors.
\newblock {\em Statist.\ Probab.\ Lett.} {\bf 16}, 
No.\ 1, 43--46, 1993.

\bibitem{spi82}
F. Spizzichino.
\newblock Extendibility of symmetric probability distributions 
and related bounds.
\newblock {\em Exchangeability in Probability and Statistics}
(G.\ Koch and F.\ Spizzichino, eds.), 313--320, 1982.

\bibitem{plato91}
J. von Plato.
\newblock Finite partial exchangeability.
\newblock {\em Statist.\ Probab.\ Lett.} {\bf 16}, 
No.\ 2, 99--102, 1991.

\bibitem{wood92}
G. R.\ Wood.
\newblock Binomial mixtures and finite exchangeability.
\newblock {\em Ann. Probab.} {\bf 20}, 1167--1173, 1992.

\bibitem{Yuan14}
L. Yuan.
\newblock On the measure division construction of $\Lambda$-coalescents.
\newblock {\em Markov Process. Related Fields.} {\bf 20}, 229--264, 2014.

\end{thebibliography}
\end{document}